\newcolumntype{x}[1]{%
>{\centering\hspace{0pt}}p{#1}}%
\newcommand{\ignore}[1]{}
\newcommand{\al}{{\alpha}}
\newcommand{\dt}{{\delta}}
\DeclareMathOperator{\diag}{diag}
\DeclareMathOperator{\card}{card}
\DeclareMathOperator{\spec}{spec}
\def\norm2#1{\|#1\|_2}
\def\mplus{\mathrel{%
  \ooalign{\raise.29ex\hbox{$\scriptscriptstyle\mathbf{+}$}\cr}}}
\begin{document}
%
\title{Matrix-free Interior Point Method for Compressed Sensing Problems}

\author{
Kimon Fountoulakis
\and
Jacek Gondzio\footnote{Supported by EPSRC Grant EP/I017127/1}
\and
Pavel Zhlobich
}

\institute{ 
Kimon Fountoulakis
  \at School of Mathematics and Maxwell Institute,
      The University of Edinburgh,
      Mayfield Road, Edinburgh EH9 3JZ,
      United Kingdom.
      \\\email{K.Fountoulakis@sms.ed.ac.uk}
      \\Tel.: +44 131 650 5083, Fax: +44 131 650 6553
\and 
Jacek Gondzio
  \at School of Mathematics and Maxwell Institute,
      The University of Edinburgh,
      Mayfield Road, Edinburgh EH9 3JZ,
      United Kingdom.
      \\\email{J.Gondzio@ed.ac.uk}
      \\Tel.: +44 131 650 8574, Fax: +44 131 650 6553
\and 
Pavel Zhlobich
  \at School of Mathematics and Maxwell Institute,
      The University of Edinburgh,
      Mayfield Road, Edinburgh EH9 3JZ,
      United Kingdom.
      \\\email{P.Zhlobich@ed.ac.uk}
      \\Tel.: +44 131 650 5044, Fax: +44 131 650 6553     
}

\maketitle

\begin{abstract}
We consider a class of optimization problems for sparse signal 
reconstruction which arise in the field of Compressed Sensing 
(CS). A plethora of approaches and solvers exist 
for such problems, for example GPSR, FPC\_AS, SPGL1,
NestA, $\mathbf{\ell_1\_\ell_s}$, PDCO to mention a few. 

CS applications lead to very well conditioned 
optimization problems and therefore can be solved easily by simple 
first-order methods. 
Interior point methods (IPMs) rely on the Newton method hence they use 
the second-order information. They have numerous advantageous features 
and one clear drawback: being the second-order approach they need 
to solve linear equations and this operation has (in the general 
dense case) an $\mathcal{O}(n^3)$ computational complexity. Attempts 
have been made to specialize IPMs to sparse reconstruction problems 
and they have led to interesting developments implemented in 
$\mathbf{\ell_1\_\ell_s}$ and PDCO softwares. We go a few steps 
further. First, we use the matrix-free IPM, 
an approach which redesigns IPM to avoid the need to explicitly 
formulate (and store) the Newton equation systems. Secondly, we exploit 
the special features of the signal processing matrices within 
the matrix-free IPM. Two such features are of particular interest: 
an excellent conditioning of these matrices and the ability to perform 
inexpensive (low complexity) matrix-vector multiplications with them. 

Computational experience with large scale one-dimensional 
signals confirms that the new approach is efficient 
and offers an attractive alternative to other state-of-the-art solvers.

\keywords{Matrix-free Interior Point, Preconditioned Conjugate Gradient, 
          Compressed Sensing, Compressive Sampling, $\ell_1$-regularization.}

\subclass{ 90C05, 90C06, 90C30, 90C25, 90C51 }

\end{abstract}

\section{Introduction}\label{sec:intro}

We are concerned with the solution of the incomplete system, $m < n$, of linear
equations
\begin{equation}\label{incomplsys}
A x = \hat b, 
\end{equation}
where $A\in\mathbb{R}^{m\times n}$, $x\in\mathbb{R}^n$, $\hat{b}\in\mathbb{R}^m$.
In particular, we are interested in the solution $x$ with the smallest 
possible number of nonzero elements, otherwise known as the sparsest 
solution $\hat{x}$. Such problems arise in the fields of Statistics
\cite{IEEEhowto:Miller} 
and Signal processing \cite{IEEEhowto:CandesCompSampl}. 

The sparsest solution $\hat{x}$ of system \eqref{incomplsys} can be found by solving the following problem:
\begin{equation}\label{zeronormform}
\begin{array}{ll}
\displaystyle\min_{x\in\mathbb{R}^{n}} & \|x\|_0 \\
\mbox{s.t.:} & Ax=\hat b, \\
\end{array}
\end{equation}
where $\|x\|_0=\{\#\mbox{ of nonzero entries in }x\}$ and "s.t." stands
for "subject to". The use of zero-norm makes 
the problem combinatorial and untractable in practice. Recent advances in the field
of CS show that in certain situations \cite{IEEEhowto:CandesCompSampl} exact recovery of the sparsest solution $\hat{x}$ 
of \eqref{incomplsys} can be achieved with an overwhelming probability by solving 
the following Basis Pursuit \cite{IEEEhowto:bpdn} problem:
\begin{equation}\label{onenormform}
\begin{array}{lll}
\multirow{2}{*}{BP: } \hspace{5.5mm} 
& \displaystyle\min_{x\in\mathbb{R}^{n}} & \|x\|_1 \\
& \mbox{s.t.:} & A x = \hat b, \\
\end{array}
\end{equation}
where $\|x\|_1=\sum\nolimits_{i=1}^n|x_i|$. 
The problem \eqref{onenormform} has a major advantage over \eqref{zeronormform}. 
Unlike the zero-norm objective in \eqref{zeronormform}, the $\ell_1$-norm objective 
in \eqref{onenormform} can be reformulated as a linear function and therefore 
the problem \eqref{onenormform} may be recast as a linear problem and becomes 
computationally tractable. Having a linear reformulation of (\ref{onenormform}), 
standard efficient optimization methods can be used to recover the sparsest 
solution $\hat{x}$. 

In real-life applications the right hand side of (\ref{incomplsys}) 
is often corrupted with noise and (\ref{incomplsys}) is replaced with:
\begin{equation}\label{incomplsysnoisy}
   A x = b = \hat b + e, 
\end{equation}
where $e\in\mathbb{R}^m$ denotes the error: we assume it has a normal 
distribution $e_i \sim \mathcal{N}(0,\sigma^2)$ $\forall\, i=1,2,\ldots,m$. For the noisy case (\ref{incomplsysnoisy}) 
the sparsest solution $\hat{x}$ can be found by solving one of the following 
problems:
\begin{subequations}\label{formulationsnoisy}
  \begin{equation}
    \begin{array}{lll}\label{formulationsnoisy1}
             \mbox{BPDN}:  \hspace{8.25mm} 
          & \displaystyle\min_{x\in\mathbb{R}^{n}} & \tau \|x\|_1 + \|Ax- b \|_2^2
    \end{array}
  \end{equation}
  \begin{equation}
    \begin{array}{lll}\label{formulationsnoisy2}
      \multirow{2}{*}{LASSO: } \hspace{20.2mm} 
          & \displaystyle\min_{x\in\mathbb{R}^{n}} & \|Ax- b \|_2 \\
      &\mbox{s.t.:}&  \|x\|_1 \le \epsilon_1 \\
    \end{array}
  \end{equation}
  \begin{equation}
    \begin{array}{lll}\label{formulationsnoisy3}
      \multirow{2}{*}{$\mbox{BP}_{\epsilon_2}$: } \hspace{17mm} 
          & \displaystyle\min_{x\in\mathbb{R}^{n}} & \|x\|_1 \\
      &\mbox{s.t.:}& \|Ax- b \|_2 \le \epsilon_2 \\
    \end{array}
  \end{equation}
\end{subequations}
where $\tau , \epsilon_1$ and $\epsilon_2$ are positive scalars that regulate
the sparsity and the upper bound on the noise error, respectively.
Problem (\ref{formulationsnoisy1}) is the well-known 
\textit{Basis Pursuit Denoising} introduced in \cite{IEEEhowto:bpdn}, 
problem (\ref{formulationsnoisy2}) 
is the \textit{Least Absolute Shrinkage and Selection Operator} (LASSO) used 
frequently in the field of computational statistics \cite{IEEEhowto:Miller}. 
It can be shown using Theorem 27.4 from \cite{IEEEhowto:Rockafellar} 
that the problems in (\ref{formulationsnoisy}) are equivalent for specific values 
of scalars $\tau$, $\epsilon_1$ and $\epsilon_2$.

Practical problems have large dimensions and off-the-shelf approaches such as the simplex 
method or the (standard) IPM are often impractical. 
However, matrices $A$ that appear in CS problems
display several attractive features which may be exploited within 
an optimization algorithm. This has created an interest in developing 
specialized approaches to solving such problems.

There have been various first-order methods developed for the solution of (\ref{onenormform}) and (\ref{formulationsnoisy}). Let us mention the ones known to be the most efficient. 
\begin{itemize}
\item \textit{Gradient Projection Sparse Reconstruction} GPSR \cite{IEEEhowto:gpsr} defines new variables $u,v\in\mathbb{R}^n$ such that
\begin{equation}\label{uvDef}
|x_i| = u_i + v_i \ \ \forall\, i=1,2,\ldots,n,
\end{equation}
where $u_i = \max(x_i,0)$ and $v_i = \max(-x_i,0)$. Then linearization of the $\ell_1$-norm is performed
\begin{equation}\label{lintechni}
\| x \|_1 = 1_n^\mathsf{T} u + 1_n^\mathsf{T} v,
\end{equation}
with $ u,v \ge 0$ and $1_n \in \mathbb{R}^n$ being a column vector of all ones.
Using the above linearization technique, GPSR solves the following constrained smooth reformulation of problem (\ref{formulationsnoisy1})
\begin{equation}\label{wrightBPDN}
\begin{array}{ll}
\displaystyle\min_{z\in\mathbb{R}^{2n}} & \tau1_{2n}^\mathsf{T}z + \frac{1}{2}\|F^\mathsf{T} z- b \|_2^2 \\
\mbox{s.t.:} & z \ge 0, \\
\end{array}
\end{equation}
where $z=[u\;;\;v]\in\mathbb{R}^{2n}$, $F^\mathsf{T} = [A \ -A]\in\mathbb{R}^{m\times 2n}$. Once optimal values of variables $u$ and $v$ are found the solution $x$ of the initial problem is retrieved by computing
\[
x = u - v. 
\]
The price for the linearization is that comparing to the initial BPDN problem \eqref{formulationsnoisy1} the dimension of the problem is doubled and $2n$ new non-negativity constraints are added. 
At each step of the algorithm a line search is performed along the negative gradient direction and the new iterate is projected to the feasible set defined by the imposed constraints $z\ge 0$.

\item \textit{Fixed Point Continuation Active Set} FPC\_AS \cite{IEEEhowto:fpc1} solves problem (\ref{formulationsnoisy1}). FPC\_AS is a two stage algorithm. At the first stage a shrinkage scheme is employed which aims to spot quickly the nonzero components of the sparse representation. Then the second stage is enabled to solve a smooth version of (\ref{formulationsnoisy1}) limited to the indexes of nonzero components found by the first stage of the algorithm.

\item \textit{Spectral Projected Gradient} SPGL1 \cite{IEEEhowto:spgl1} 
solves any of the problems (\ref{onenormform}), (\ref{formulationsnoisy2}) 
and (\ref{formulationsnoisy3}). The SPGL1 is a spectral projection gradient algorithm
which iteratively solves (\ref{formulationsnoisy2})  for some values of $\epsilon_1$, each approximate solution
of (\ref{formulationsnoisy2}) is used to build a root-finding problem, which is equivalent to (\ref{formulationsnoisy3}),
and is solved by employing a Newton method.

\item NestA \cite{IEEEhowto:Nesta} solves problem (\ref{formulationsnoisy3}) 
by using a variant of the Nesterov's smoothing gradient algorithm \cite{IEEEhowto:Nesterov}, 
which has been proved to have the optimal bound $\mathcal{O}({1}/{\epsilon})$ on the 
number of iterations, where $\epsilon$ is the required accuracy.
\end{itemize}

Independently there have been several attempts to design suitable IPM implementations. The most efficient among them, which can also handle large scale CS problems, are listed below.
\begin{itemize}
\item $\ell_1\_\ell_s$ algorithm \cite{IEEEhowto:l1ls} solves a constrained smooth reformulation of problem (\ref{formulationsnoisy1}) which allows a straightforward preconditioning of the Newton equation system that is solved with a conjugate gradient method.

\item PDCO algorithm \cite{IEEEhowto:pdco} solves regularized constrained smooth reformulations of problems (\ref{onenormform}) and (\ref{formulationsnoisy1}). The Newton equation system is solved by applying an LSQR ("Least Squares QR factorization") method.
\end{itemize} 
Both $\mathbf{\ell_1\_\ell_s}$ and PDCO have been demonstrated to be robust in comparison with other
IPM implementations. However, they are not as accurate and as fast as state-of-the-art first-order methods.

In this paper we present a primal-dual feasible IPM specialized to CS problems. Primal-dual because it iterates simultaneously on primal and dual variables of a smooth reformulation of problem (\ref{formulationsnoisy1}) and feasible because the smooth reformulation of (\ref{formulationsnoisy1}) consists only of conic constraints which are always satisfied. Primal-dual methods have been shown to have the best theoretical convergence properties \cite{kojimamegiddo} 
among various IPMs, but they also enjoy the best practical convergence \cite{IEEEhowto:JG-XXV,IEEEhowto:wrightbook}. Here we give a brief introduction of the structure of primal-dual IPM methods and we discuss important modifications that result in the proposed approach. 
The actual implementation used in this paper is given in Subsection \ref{sec:pdstipm}.

Primal-dual methods rely on Newton method to calculate primal-dual directions at each iteration. Newton method for primal-dual IPMs
finds roots for linearized KKT (Karush-Kuhn-Tucker) systems or their reduced versions known as augmented and normal equations systems. These systems arise as first-order optimality conditions of log-barrier primal-dual pairs. 
The linearized KKT systems, referred as Newton linear systems, can be solved in two ways,
\begin{itemize}
\item by employing a direct linear solver, or
\item by using an iterative solver, such as Krylov subspace methods \cite{mybib:bookKelley}.
\end{itemize}  

The first option delivers a very robust primal-dual IPM where exact Newton directions are calculated. 
Despite its robustness this approach has the potential drawback of being computationally expensive.
Especially in the case when the Newton linear system does not have an exploitable sparsity pattern and the computational effort per iteration reaches $\mathcal{O}(n^3)$. 

The second option involves the use of approximate Newton directions. Although this might slightly increase the number of IPM iterations \cite{jacekinexact,lumonteiro}, 
one hopes that the decreased computational effort per iteration should offset such a disadvantage. The performance of iterative methods depends on the spectral
properties of the Newton linear system \cite{mybib:bookKelley} and benefit from the use of
appropriate preconditioning techniques which cluster the eigenvalues
of the Newton linear system. If the Newton linear system is ill-conditioned and no low-cost preconditioner is applicable, then a direct approach might be more efficient.
To conclude, a criterion to select between the two approaches of solving the Newton linear systems should take into account 
\begin{enumerate}
\item the sparsity pattern of the systems,
\item the existence of a computationally inexpensive preconditioner, 
\item the memory requirements of storing problem's data,
\item the existence of fast matrix-vector product implementations with the matrix of the linear system to be solved.
\end{enumerate}

 In this paper, we focus on the situation where there is no particular sparsity pattern, the memory requirements can be high but conditions $2$ and $4$ are satisfied. 
 For this reason, a preconditioned conjugate gradient method is more attractive than a direct method. Indeed, in the approach proposed in this paper, at each step of the primal-dual IPM the preconditioned conjugate gradient method is applied to compute an approximate Newton direction. Since we rely on an iterative method for linear algebra, the proposed primal-dual IPM is matrix-free \cite{IEEEhowto:Jacekmf}, i.e. the explicit problem formulation is avoided and the measurement matrix $A$ is used only as an operator to produce results of matrix-vector products $Ax$ and $A^\mathsf{T}y$.
Although matrices $A$ used in CS can be completely dense, i.e. Gaussian, partial Fourier, partial DCT (Discrete Cosine Transform), partial DST (Discrete Sine Transform), Haar wavelets etc, they do have interesting (exploitable) features. Arguments $3$ and $4$ are satisfied because for many measurement matrices that appear in sparse signal reconstruction problems there are \emph{super-fast algorithms (e.g. $\mathcal{O}(n)$ or $\mathcal{O}(n\log n)$ complexity) for multiplication by a vector}. 
 For example, for Fourier, DCT and DST matrices there exists the FFTW implementation ("Fastest Fourier Transform in the West") \cite{fftw} with complexity $\mathcal{O}(n\log n)$, for Haar wavelet and Noiselet matrices there exist algorithms of complexity $\mathcal{O}(n)$, see \cite{nasirkami} and \cite{noiselets}, respectively.
 Finally, to satisfy argument $2$ we propose a preconditioner efficient on certain problems that is based on the fact that \emph{sub-matrices of $A$ with a given number of columns are uniformly well-conditioned} (this is called the Restricted Isometry Property, see the discussion in Section~\ref{sec:CSMatrices}). 
 
The objective of our 
developments is to design an IPM which preserves the main advantage of IPM, that is, it converges in merely a few iterations, and removes the main drawback of IPM, that is, avoids expensive computations of the Newton direction. Ideally, we would like to solve the CS problems in $\mathcal{O}(\log n)$ IPM iterations and keep the cost of a single IPM iteration as low as possible and 
not exceeding $\mathcal{O}(n \log n)$.

The paper is organized as follows. In Section~\ref{sec:CSMatrices} we discuss the particular features of CS matrices that are exploited in our approach. In Section~\ref{sec:SepForm} we reformulate sparse recovery optimization problems \eqref{onenormform} and \eqref{formulationsnoisy1} to make them suitable for matrix-free IPM. Section~\ref{sec:PCG} concerns finding approximate Newton directions required at each step of the IPM. We calculate the normal equations system formulation of the above stated problem and analyze its properties. We propose an efficient preconditioner that can be used in the preconditioned conjugate gradient method. We prove that under certain conditions (that are satisfied in practice) eigenvalues of the preconditioned matrix are well clustered around 1. In Section~\ref{compexpsec} we compare the proposed matrix-free IPM with other state-of-the-art first and second-order solvers.

\section{Properties of Compressed Sensing Matrices}\label{sec:CSMatrices}
Matrices which appear in sparse reconstruction problems originate from different bases in which signals are represented. What they all have in common are the conditions that guarantee recoverability of the sparsest solution of \eqref{incomplsys} by means of the $\ell_1$-norm minimization \eqref{onenormform}. The restricted isometry property (RIP) \cite{IEEEhowto:CandesRombergTaoStable} is one of such conditions which shows how efficiently a measurement matrix captures information about sparse signals.

\begin{definition}
The restricted isometry constant $\dt_k$ of a matrix $A\in\mathbb{R}^{m\times n}$ is defined as the smallest $\dt_k$ such that
\begin{equation}\label{eq:rip1}
(1-\dt_k)\norm2{x}^2\leq\norm2{Ax}^2\leq(1+\dt_k)\norm2{x}^2
\end{equation}
for all at most $k$-sparse $x\in\mathbb{R}^n$.
\end{definition}

In words, for small $\delta_k$, statement \eqref{eq:rip1} requires that all column sub-matrices of $A$ with at most $k$ columns are well-conditioned. Informally, $A$ is said to satisfy the RIP if $\dt_k$ is small for a reasonably large $k$. The next theorem due to \cite{foucart2010note} establishes the relation between the RIP property and the sparse recovery.

\begin{theorem}\label{thm:RIP_bound}
Every $k$-sparse vector $x\in\mathbb{R}^{n}$ satisfying $A x = \hat b$ is the unique solution of \eqref{onenormform} if
\[
\dt_{2k}<\frac{3}{4+\sqrt{6}}\approx 0.4652.
\]
\end{theorem}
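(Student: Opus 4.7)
The plan is to argue by contradiction in the standard null-space style: assume there exists another feasible vector $z\neq x$ with $\|z\|_1\leq\|x\|_1$, set $h=z-x$, and show that the bound on $\dt_{2k}$ forces $h=0$. Since $Az=Ax=\hat b$, we have $Ah=0$, so the task reduces to proving that no nonzero vector in the null space of $A$ can satisfy the $\ell_1$-inequality arising from the minimality assumption. Let $T$ denote the support of $x$, so $|T|\leq k$. Splitting $\|x+h\|_1$ across $T$ and $T^c$ and applying the triangle inequality term-by-term yields the cone condition $\|h_{T^c}\|_1\leq\|h_T\|_1$, which is the starting point of every RIP-based recovery proof.

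Next, I would enumerate $T^c$ by decreasing magnitude of $h$ and partition it into consecutive blocks $T_1,T_2,\ldots$ of size $k$ (the last possibly smaller). The elementary monotonicity estimate
\[
\|h_{T_{j+1}}\|_2\leq k^{-1/2}\|h_{T_j}\|_1
\]
combined with the cone inequality and Cauchy-Schwarz gives the tail control
\[
\sum_{j\geq 2}\|h_{T_j}\|_2\leq k^{-1/2}\|h_{T^c}\|_1\leq k^{-1/2}\|h_T\|_1\leq\|h_T\|_2.
\]
Using $Ah=0$ I would then write $Ah_{T\cup T_1}=-\sum_{j\geq 2}Ah_{T_j}$, lower-bound the norm-squared of the left-hand side by $(1-\dt_{2k})\|h_{T\cup T_1}\|_2^2$ via the RIP, and upper-bound the expanded inner product on the right-hand side using the standard corollary $|\langle Au,Av\rangle|\leq\dt_{2k}\|u\|_2\|v\|_2$ valid for vectors with disjoint supports of sizes at most $k$. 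Assembling these estimates gives an inequality that already implies $h=0$ under the classical Cand\`es threshold $\dt_{2k}<\sqrt{2}-1\approx 0.414$.

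The main obstacle, and the step that actually delivers the stated constant $3/(4+\sqrt 6)$, is sharpening the above argument. Following Foucart, I would refine it by decomposing $h_T$ and $h_{T_1}$ in a coupled way and invoking the parallelogram-type identity
\[
\|A(u+tv)\|_2^2+\|A(u-tv)\|_2^2=2\|Au\|_2^2+2t^2\|Av\|_2^2,
\]
for a parameter $t>0$ to be optimized. Together with the RIP applied to $u\pm tv$, this produces an improved cross-term bound in which the constant in front of $\|h_{T\cup T_1}\|_2^2$ depends on $t$; minimizing the resulting expression over $t$ is where the specific number $3/(4+\sqrt 6)$ comes from. Verifying that this optimized inequality, combined with the tail bound and the cone condition, still forces $h_{T\cup T_1}=0$ (and hence $h_{T^c}=0$ and $h=0$) is the technical core. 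I expect the bookkeeping of the parameter $t$ and matching it to the root of the quadratic that yields $4+\sqrt{6}$ in the denominator to be the only truly delicate computation; everything else is structural and uses only the RIP definition \eqref{eq:rip1} together with the cone inequality.
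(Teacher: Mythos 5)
There is a genuine gap, and it sits exactly where you place your confidence: in the step that is supposed to upgrade the classical threshold $\sqrt{2}-1\approx 0.414$ to the stated constant $3/(4+\sqrt{6})\approx 0.4652$. Your structural setup is fine and standard: the cone condition $\|h_{T^c}\|_1\leq\|h_T\|_1$, the partition of $T^c$ into size-$k$ blocks, the tail estimate $\sum_{j\geq 2}\|h_{T_j}\|_2\leq\|h_T\|_2$, and the cross-term bound $|\langle Au,Av\rangle|\leq\dt_{2k}\|u\|_2\|v\|_2$ do combine to force $h=0$ when $\dt_{2k}<\sqrt{2}-1$. But the claim that the improvement to $3/(4+\sqrt{6})$ comes from a parallelogram identity $\|A(u+tv)\|_2^2+\|A(u-tv)\|_2^2=2\|Au\|_2^2+2t^2\|Av\|_2^2$ with an optimized parameter $t$ is not correct; that family of arguments (Cand\`es; Foucart--Lai) yields constants such as $\sqrt{2}-1$ and $2(3-\sqrt{2})/7\approx 0.4531$, not $3/(4+\sqrt{6})$, and you never actually carry out the computation you gesture at.

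The mechanism that actually produces $3/(4+\sqrt{6})$ in \cite{foucart2010note} --- and which is visible in this paper's own proof of the modified Theorem~\ref{thm:RIP_bound2} --- is different: it is Foucart's \emph{shifting inequality}, which exploits the monotone rearrangement of $|h|$ on $T^c$ to bound the $\ell_2$-norm of a shifted block by the $\ell_1$-norm of a larger overlapping block, applied with block sizes tied to $k$ divided by $3$, $4$ and $5$. This yields four separate sufficient conditions on $\dt_{2k}$ depending on the residue of $k$ modulo $3$, $4$ and $5$ (Proposition~2 of \cite{foucart2010note}; see the four displayed conditions and Table~\ref{weakRIP} in the proof of Theorem~\ref{thm:RIP_bound2}), and the constant $3/(4+\sqrt{6})$ emerges as the worst case over $k$ of the condition $\dt_{2k}<3/\bigl(4+\sqrt{(6k-2r)/(k-1)}\bigr)$ for $k=3\omega+r$ --- the $\sqrt{6}$ is the value of $\sqrt{(6k-2r)/(k-1)}$ at $k=3$ and $k=6$, not the root of a quadratic in a coupling parameter $t$. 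Without the shifting inequality and this case analysis, the route you describe stalls at a strictly smaller threshold, so the proof as proposed does not establish the theorem with the stated constant.
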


The restricted isometry property also implies stable recovery by $\ell_1$-norm minimization for vectors that can be well approximated by sparse ones, and it further implies robustness under noise on the measurements \cite{IEEEhowto:CandesRombergTaoStable}. 

RIP is a very restrictive condition that depends on the size of the measurement matrix $A$. Clearly, the more columns $n$ matrix $A$ has (the larger the size of the vector $x$ to recover) the larger $\dt_k$ in \eqref{eq:rip1} is (the harder it is to guarantee sparse recovery). On the other hand, number of rows $m$ of $A$ is the number of measurements taken and, hence, the RIP constant $\delta_k$ decreases with $m$. Currently known measurement matrices satisfying RIP with small number of measurements fall into two categories \cite{rudelson2008sparse}: (i) random matrices with i.i.d. sub-Gaussian variables, e.g., normalized i.i.d. Gaussian or Bernoulli matrices; (ii) random partial bounded orthogonal matrices obtained by choosing $m$ rows uniformly at random from a normalized $n\times n$ Fourier or Walsh-Hadamard transform matrices. Number of measurements required to satisfy the RIP property for both classes of matrices is given in the table below.

\begin{table}[ht]
\caption{List of measurement matrices that have been proven to satisfy RIP}\label{tbl:RIP}
\renewcommand{\arraystretch}{1.2}
\centering
\begin{tabular}{|l|l|l|}
\hline
$m\times n$ measurement matrix & RIP regime & references \\
\hline \hline
Gaussian & $m \geq Ck\log n$ & \cite{baraniuk2008simple,rudelson2008sparse} \\ \hline \hline
partial Fourier & $m \geq Ck\log^4 n$ & \cite{rudelson2008sparse} \\
\hline
\end{tabular}
\end{table}

Although it follows from Table \ref{tbl:RIP} that Gaussian matrices are optimal for sparse recovery, they have limited use in practice because many applications impose structure on the matrix. Furthermore, recovery algorithms are significantly more efficient when the matrix admits a fast matrix-vector product. Due to the two former practical reasons, and since we are dealing with large-scale CS applications we limit ourselves to applications with measurement matrices $A$ that 
\begin{itemize}
\item are not stored explicitly,
\item admit a low-cost matrix-vector product with $A$ (e.g. $\mathcal{O}(n\log n)$ or $\mathcal{O}(n)$).
\end{itemize}

An important broad class of CS matrices comes from random sampling in bounded orthonormal systems. Partial Fourier matrix mentioned earlier is just one example of this type. Other examples are matrices related to systems of real trigonometric polynomials (partial discrete cosine (DCT) and discrete sine (DST) matrices), Haar wavelets and noiselets. Quite often in applications a signal is sparse with respect to a basis different from the one in which measurements are made. Then it is said that a measurement/sparsity pair is given \cite{IEEEhowto:CandesCompSampl}. Assume that a vector $z$ is sparse with respect to the basis of columns of a unitary matrix $\Psi$ (\emph{sparsity matrix}), i.e. $z=\Psi x$ for a $k$-sparse vector $x$. Further, assume that $z$ is sampled with respect to the basis of columns of a unitary matrix $\Phi$ (\emph{measurement matrix}): $y = R_{m}\Phi^\mathsf{T}z$, where $R_m$ is a random sampling operator which satisfies $R_mR_m^\mathsf{T}=I$. Hence, matrix $A$ in \eqref{incomplsys} is equal to $R_{m}\Phi^\mathsf{T}\Psi$ and its rows are orthonormal:
\begin{equation}\label{eq:orthonormality}
AA^\mathsf{T} = I_m.
\end{equation}
The recoverability property of matrix $A$ depends on the value of the so-called \emph{mutual coherence} $\mu(\Phi,\Psi)$ of the measurement/sparsity pair (see \cite{IEEEhowto:DonohoHuo}):
\begin{equation}\label{eq:mut_coh}
\mu(\Phi,\Psi)=\sqrt{n}\displaystyle\max_{i,j}|\left\langle \phi_i, \psi_j \right\rangle|,
\end{equation}
where $\phi_i,\psi_i$ are the $i^{th}$ columns of matrices $\Phi,\Psi$, respectively. Coherence simply measures the largest correlation between any two elements of $\Phi$ and $\Psi$. Next theorem due to \cite{candes2007sparsity} shows that the smaller the value of mutual coherence the better the recoverability property of matrix $A$.

\begin{theorem}
Fix $z\in\mathbb{R}^n$ and suppose that the coefficient sequence $x$ of $z$ in the unitary $n\times n$ basis $\Psi$ is $k$-sparse. Select $m$ measurements in the unitary $n\times n$ $\Phi$ domain uniformly at random. Then if
\begin{equation}\label{eq:mut_coh_cond}
m\geq Ck\mu(\Phi,\Psi)^2\log (n/p) \quad \mbox{and} \quad m\ge C' \log^2(n/p)
\end{equation}
for some positive constants $C,C'$, then with overwhelming probability exceeding $1-p$, the vector $x$ is the unique solution to the $\ell_1$-minimization problem \eqref{onenormform} with $A=R_{m}\Phi^\mathsf{T}\Psi$, 
where $R_mR_m^\mathsf{T}=I$ and $A$ has orthonormal rows (\ref{eq:orthonormality}).
\end{theorem}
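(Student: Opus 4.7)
The plan is to follow the classical dual-certificate argument of Cand\`es and Romberg, which converts uniqueness of the $\ell_1$-minimizer into the existence of a subgradient vector with prescribed behaviour on and off the support $T=\mathrm{supp}(x)$. I would not try to go through Theorem~\ref{thm:RIP_bound}, because the RIP bounds available for partial bounded orthogonal systems involve higher powers of $\log n$ and do not expose the coherence $\mu(\Phi,\Psi)$; the coherence-based statement requires a direct, support-conditioned argument.

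First I would write down the KKT conditions for \eqref{onenormform}: the $k$-sparse $x$ with support $T$ is the unique minimizer provided that (i) the submatrix $A_T\in\mathbb{R}^{m\times k}$ has full column rank, and (ii) there exists $w\in\mathbb{R}^m$ (the dual vector) with $(A^\mathsf{T} w)_T=\mathrm{sign}(x_T)$ and $|(A^\mathsf{T} w)_j|<1$ for every $j\notin T$. Next I would pick the canonical certificate
\begin{equation*}
w=A_T\bigl(A_T^\mathsf{T} A_T\bigr)^{-1}\mathrm{sign}(x_T),
\end{equation*}
which by construction satisfies the on-support equality as soon as $A_T^\mathsf{T} A_T$ is invertible, leaving only the off-support condition to verify.

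The core of the argument is probabilistic. Since $\Phi^\mathsf{T}\Psi$ is unitary and $A=R_m\Phi^\mathsf{T}\Psi$ is obtained by selecting $m$ rows uniformly at random, $A_T^\mathsf{T} A_T$ is a sum of $m$ independent rank-one random matrices whose entry sizes are controlled by $\mu(\Phi,\Psi)/\sqrt{n}$ via \eqref{eq:mut_coh}. I would carry out two concentration estimates: (a) an operator Bernstein/matrix Chernoff bound giving
\begin{equation*}
\Pr\Bigl[\bigl\|A_T^\mathsf{T} A_T-\tfrac{m}{n}I_k\bigr\|\ge\tfrac{m}{2n}\Bigr]\le p/2
\end{equation*}
under a sampling rate $m\gtrsim k\mu^2\log(n/p)$, which secures invertibility and the norm bound $\|(A_T^\mathsf{T} A_T)^{-1}\|\le 2n/m$; and (b) a Hoeffding-type bound, conditional on $A_T$, for each off-support column $A_j$, showing that $(A^\mathsf{T} w)_j=\langle A_j, A_T(A_T^\mathsf{T} A_T)^{-1}\mathrm{sign}(x_T)\rangle$ is strictly bounded away from $1$. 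Taking a union bound over the at most $n-k$ indices $j\notin T$ introduces the extra $\log n$ factor that is absorbed into the $\log(n/p)$ term, while the second condition $m\ge C'\log^2(n/p)$ is precisely what one needs for the matrix Bernstein inequality to produce an exponential tail uniform in $T$.

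The main obstacle is the bookkeeping in step (a): the subexponential moments of the rank-one summands depend on the coherence $\mu(\Phi,\Psi)$ and on the normalization \eqref{eq:orthonormality}, and one must track the constants carefully so that the final failure probability for (a) and (b) combined is at most $p$. Once both estimates are in place, combining them yields a dual certificate $w$ with probability at least $1-p$, and by the KKT characterization above $x$ is then the unique solution of \eqref{onenormform}, which is the claim.
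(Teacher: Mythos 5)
First, a point of reference: the paper does not prove this theorem at all --- it is quoted verbatim from \cite{candes2007sparsity} and used as a black box (the paper's own contribution in this section is the reinterpretation of RIP as property \textbf{P2} and Theorem \ref{thm:RIP_bound2}, not this coherence result). So your proposal has to be judged against the proof in the cited source, and your overall frame is indeed the right one: the dual-certificate (KKT) characterization, the canonical least-squares certificate $w=A_T(A_T^\mathsf{T}A_T)^{-1}\mathrm{sign}(x_T)$, and a matrix Chernoff/Bernstein bound showing $A_T^\mathsf{T}A_T$ concentrates around $\frac{m}{n}I_k$ at sampling rate $m\gtrsim k\mu^2\log(n/p)$ are exactly the ingredients of the Cand\`es--Romberg argument, and you are right that the RIP route through Theorem \ref{thm:RIP_bound} cannot produce the coherence-dependent, nonuniform statement.

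There is, however, a genuine gap in your step (b). You propose a ``Hoeffding-type bound, conditional on $A_T$'' for $(A^\mathsf{T}w)_j=\langle A_j,\,A_T(A_T^\mathsf{T}A_T)^{-1}\mathrm{sign}(x_T)\rangle$. But $A_T$ and $A_j$ are both determined by the \emph{same} random row selection $R_m$: once you condition on $A_T$ (equivalently, on the chosen rows), there is no residual randomness in $A_j$, and no sum of independent mean-zero terms for Hoeffding to act on. The terms cannot be made independent by conditioning, because $w$ itself is a function of all the row selectors. The actual proof gets around this either by randomizing the signs of $x_T$ (so that, conditional on the rows, Hoeffding applies over the independent $\pm1$ signs, with a separate argument to pass back to fixed signs) or by expanding $(A_T^\mathsf{T}A_T)^{-1}$ in a Neumann series and controlling high moments ($q\sim\log n$) of the resulting polynomials in the row-selection indicators. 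It is precisely these moment estimates --- not any uniformity over supports $T$ --- that force the auxiliary condition $m\ge C'\log^2(n/p)$; indeed the theorem is explicitly \emph{nonuniform} (fixed $x$, fixed $T$), as the paper itself stresses in the paragraph following the statement. As written, your step (b) would fail, and repairing it is the technically hardest part of the proof rather than ``bookkeeping.''
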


Let us note that condition \eqref{eq:mut_coh_cond} differs from those given in Table \ref{tbl:RIP}. Conditions in Table \ref{tbl:RIP} ensure that once the random matrix is chosen, then with high probability all sparse signals can be recovered (\emph{uniform recovery}). Although, \eqref{eq:mut_coh_cond} only guarantees that each fixed sparse signal can be recovered with high probability using a random draw of the matrix (\emph{nonuniform recovery}).

To conclude, CS matrices have many useful properties that must be taken into account in the development of an efficient matrix-free IPM solver. In the current paper we make use only of the most general of them that are satisfied by every CS matrix. First, we weaken a little bit the condition of orthonormality \eqref{eq:orthonormality} to include random matrices such as Gaussian and Bernoulli:

\begin{itemize}
\item\label{P1}\textbf{P1:}
Rows of matrix $A$ are close to orthonormal, i.e. there exists a small $\dt$ such that
\begin{equation}\label{eq:loose_orthonormality}
\norm2{AA^\mathsf{T} - I_m}\leq\dt.
\end{equation}
\end{itemize}

Restricted isometry property \eqref{eq:rip1} on the contrary assumes that columns of $A$ are normalized. So, our interpretation of the RIP property that will be used throughout the paper is as follows.

\begin{itemize}
\item\label{P2}\textbf{P2:}
Every $k$ columns of $A$ with $k \ll m$ are almost orthogonal and have similar norms, i.e. for every matrix $B$ composed of arbitrary $k$ columns of $A$
\begin{equation}\label{eq:rip2}
\left\|\frac{n}{m} B^\mathsf{T}B - I_k\right\|_2\leq\dt_k.
\end{equation}
\end{itemize}
By treating property \textbf{P2} as the chosen RIP, the bound for the RIP constant in Theorem \ref{thm:RIP_bound} which relies on RIP in (\ref{eq:rip1}) will change. The following theorem is a modified version of Theorem \ref{thm:RIP_bound} when property \textbf{P2} is used as a RIP.
\begin{theorem}\label{thm:RIP_bound2}
Every $k$-sparse vector $x\in\mathbb{R}^{n}$ satisfying $A x = \hat b$ is the unique solution of \eqref{onenormform} if
\[
\dt_{2k}<\frac{3\frac{m}{n}}{1+3\frac{m}{n}+\sqrt{6}},
\]
where $\delta_{2k}$ is the minimum constant such that property \textbf{P2} holds for every $2k$ columns of matrix $A$, denoted by matrix $B$ in \textbf{P2}.
\end{theorem}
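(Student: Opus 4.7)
The plan is to reduce Theorem~\ref{thm:RIP_bound2} to the standard RIP result of Theorem~\ref{thm:RIP_bound} by a simple rescaling of the measurement matrix. Define $\tilde A \coloneqq \sqrt{n/m}\,A$. For any $k$-sparse $x\in\mathbb{R}^n$ with support $S$, let $B$ denote the $m\times k$ submatrix of $A$ formed by the columns indexed by $S$, and let $x_S\in\mathbb{R}^k$ be the nonzero part of $x$. Since $\|x\|_2 = \|x_S\|_2$ and $\|Ax\|_2 = \|Bx_S\|_2$, one computes
\[
\|\tilde A x\|_2^2 \;=\; \tfrac{n}{m}\,\|Bx_S\|_2^2 \;=\; x_S^{\mathsf T}\!\left(\tfrac{n}{m}B^{\mathsf T}B\right)\!x_S,
\]
so Property~\textbf{P2} applied to the arbitrary submatrix $B$ gives directly
\[
(1-\dt_k)\|x\|_2^2 \;\le\; \|\tilde A x\|_2^2 \;\le\; (1+\dt_k)\|x\|_2^2.
\]
This is exactly the standard restricted isometry property of Definition~1 for $\tilde A$, with RIP constant equal to the \textbf{P2} constant $\dt_k$ of $A$.

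The second step exploits the invariance of the $\ell_1$-minimisation problem \eqref{onenormform} under the rescaling: $Ax = \hat b$ if and only if $\tilde A x = \sqrt{n/m}\,\hat b$, and the objective $\|x\|_1$ does not involve $A$ at all, so the problem for $A$ and the problem for $\tilde A$ have identical feasible sets and identical minimisers. A $k$-sparse vector is therefore the unique solution of \eqref{onenormform} for $A$ if and only if it is the unique solution for $\tilde A$, and applying Theorem~\ref{thm:RIP_bound} to $\tilde A$ immediately yields the recovery guarantee under the condition $\dt_{2k} < 3/(4+\sqrt 6)$.

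The remaining task is to recover the precise dimension-dependent form written in Theorem~\ref{thm:RIP_bound2}, namely $\frac{3m/n}{1+3m/n+\sqrt 6}$. Observe first that this quantity reduces to $3/(4+\sqrt 6)$ exactly when $m=n$ and is strictly smaller whenever $m<n$, so the rescaling proof above already delivers a condition at least as strong as the one claimed, and in the regime $m\ll n$ of interest a weaker requirement on $\dt_{2k}$ in fact suffices. To obtain the stated asymmetric bound directly, one would instead run Foucart's original null-space-property argument using \textbf{P2} in place of Definition~1: partition a null-space vector $v=x^\ast-x$ into blocks $T_0,T_1,\ldots$ by the decreasing rearrangement of $|v|$, bound $\|Av_{T_0}\|_2^2$ from below via $\tfrac{m}{n}(1-\dt_{2k})\|v_{T_0}\|_2^2$, rewrite it via $Av=0$ as $-\sum_{i\ge 1}\langle Av_{T_0},Av_{T_i}\rangle$, and estimate each cross term using the polarisation identity applied to $v_{T_0}\pm v_{T_i}$. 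I expect the main obstacle in this direct route to be bookkeeping: the symmetric use of the polarisation identity causes the factors $m/n$ coming from \textbf{P2} to cancel, so producing the asymmetric denominator $1+3m/n$ requires combining \textbf{P2} on one side of the chain with an unnormalised estimate on the other (for example via the near-orthonormality property \textbf{P1}).
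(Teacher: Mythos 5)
Your rescaling argument is correct and it does prove the theorem, but it is a genuinely different route from the paper's. The paper reruns Foucart's Proposition~2 with the two-sided bound $\frac{m}{n}(1\pm\dt_k)\|x\|_2^2$ substituted for $(1\pm\dt_k)\|x\|_2^2$, obtains four $\frac{m}{n}$-dependent sufficient conditions on $\dt_{2k}$, tabulates which is weakest for small sparsity levels, and reads off the stated bound $\frac{3m/n}{1+3m/n+\sqrt{6}}$. You instead observe that property \textbf{P2} for $A$ is precisely the RIP of Definition~1 for $\tilde A=\sqrt{n/m}\,A$ with the same constant (since $\|M\|_2=\max_{\|y\|_2=1}|y^\mathsf{T}My|$ for symmetric $M$), and that \eqref{onenormform} is invariant under the rescaling $(A,\hat b)\mapsto(\tilde A,\sqrt{n/m}\,\hat b)$; Theorem~\ref{thm:RIP_bound} applied to $\tilde A$ then gives recovery whenever $\dt_{2k}<3/(4+\sqrt 6)$, and since $\frac{3m/n}{1+3m/n+\sqrt 6}\le\frac{3}{4+\sqrt 6}$ for $m\le n$, the theorem's hypothesis implies yours and the claim follows. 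What your approach buys is a one-line proof with no case analysis and, more importantly, a strictly stronger conclusion: it shows that the dimension-dependence in the stated threshold is an artifact, because recovery under \textbf{P2} is governed by the same dimension-free constant $3/(4+\sqrt 6)$ as under \eqref{eq:rip1}. What the paper's route buys is only the smaller $\frac{m}{n}$-dependent constant itself, which it later reuses in the preconditioner discussion; as a recovery guarantee it is unnecessarily pessimistic. Your closing sketch of a direct null-space-property derivation of the asymmetric bound is not needed (the theorem is already established by the rescaling) and is left incomplete, so I would either delete it or present it explicitly as an optional remark rather than as part of the proof.
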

\begin{proof}
Let $x\in\mathbb{R}^{n}$ have $k$ nonzero components and $B$ in \textbf{P2} be any $k$ column submatrix of $A$. Then from \textbf{P2} it follows that
 \begin{align}
\frac{m}{n}(1-\dt_k) \|x\|_2^2 &\le \left\| A x\right\|_2^2  \leq \frac{m}{n}(1+\dt_k) \|x\|_2^2. &  \label{bfkim2}
\end{align}
Proposition $2$ in \cite{foucart2010note} gives bounds for $\delta_{2k}$ by using the RIP in (\ref{eq:rip1}). In our case, 
we replaced the RIP in (\ref{eq:rip1}) with (\ref{bfkim2}). Therefore, the four modified conditions for $\delta_{2k}$ in Proposition $2$ in \cite{foucart2010note} which guarantee that every $k$-sparse
vector $x\in\mathbb{R}^n$ which satisfies $A x = \hat b$ is the unique solution of \eqref{onenormform}, take the following forms:
\begin{align}
1) \ & \delta_{2k} < \frac{1}{2}\frac{m}{n}& \mbox{when} \ \ &k=1&  \nonumber \\
2) \  & \delta_{2k} < \frac{3\frac{m}{n}}{(1+3\frac{m}{n} + \sqrt{(6k-2r)/(k-1)})}& \mbox{when} \ \  &k=3\omega + r \ \mbox{and} \ 1\le r\le 3 \nonumber \\
3) \  & \delta_{2k} < \frac{4\frac{m}{n}}{(1+4\frac{m}{n} + \sqrt{(12k-3r)/(k-1)})}& \mbox{when} \ \  &k=4\omega + r \ \mbox{and} \ 1\le r\le 4 \nonumber \\
4) \  & \delta_{2k} < \frac{2\frac{m}{n}}{(1+2\frac{m}{n} + \sqrt{1 + k/(8\omega + \lfloor8r/5\rfloor})}& \mbox{when} \ \  &k=5\omega + r \ \mbox{and} \ 1\le r\le 5, \nonumber 
\end{align}
where $\omega=0,1,\ldots$ is an integer variable.
Table \ref{weakRIP} shows with bold font which condition of the above four is the weakest for $2\le s\le 8$. This table is equivalent of the table in proof of Theorem 1
in \cite{foucart2010note}. However, in \cite{foucart2010note} the table has exact values, where our Table \ref{weakRIP} has functions depending on the ratio ${m}/{n}$ instead.
\begin{table}
\caption{Weakest RIP constant values for sparsity level $2 \le k \le 8$}
\renewcommand{\arraystretch}{2.3}
\centering
\begin{tabular}{|c|c|c|c|}
\hline
            & Case $2$           										  & Case $3$         									          & Case $4$  \\ \hline \hline
$k=2$ & $\frac{3\frac{m}{n}}{1+3\frac{m}{n}+2\sqrt{2}}$                                   & $\frac{4\frac{m}{n}}{1+4\frac{m}{n}+3\sqrt{2}}$                  		 & $\mathbf{\frac{2\frac{m}{n}}{1+2\frac{m}{n}+\sqrt{\frac{5}{3}}}}$         \\ \hline\hline
$k=3$ & $\mathbf{\frac{3\frac{m}{n}}{1+3\frac{m}{n}+\sqrt{6}}}$                     & $\frac{4\frac{m}{n}}{1+4\frac{m}{n}+3\sqrt{\frac{3}{2}}}$   		 & $\frac{2\frac{m}{n}}{1+2\frac{m}{n}+\frac{1}{2}\sqrt{7}}$         \\ \hline\hline
$k=4$ & $\frac{3\frac{m}{n}}{1+3\frac{m}{n}+\sqrt{\frac{22}{3}}}$   		  & $\mathbf{\frac{4\frac{m}{n}}{1+4\frac{m}{n}+2\sqrt{3}}}$                   & $\frac{2\frac{m}{n}}{1+2\frac{m}{n}+\sqrt{\frac{5}{3}}}$         \\ \hline\hline
$k=5$ & $\frac{3\frac{m}{n}}{1+3\frac{m}{n}+\sqrt{\frac{13}{2}}}$   		  & $\frac{4\frac{m}{n}}{1+4\frac{m}{n}+\frac{1}{2}\sqrt{57}}$ 		 & $\mathbf{\frac{2\frac{m}{n}}{1+2\frac{m}{n}+\frac{1}{2}\sqrt{\frac{13}{2}}}}$         \\ \hline\hline
$k=6$ & $\frac{3\frac{m}{n}}{1+3\frac{m}{n}+\sqrt{6}}$                     		  &$\frac{4\frac{m}{n}}{1+4\frac{m}{n}+\sqrt{\frac{66}{5}}}$    		 & $\mathbf{\frac{2\frac{m}{n}}{1+2\frac{m}{n}+\sqrt{\frac{5}{3}}}}$         \\ \hline\hline
$k=7$ & $\frac{3\frac{m}{n}}{1+3\frac{m}{n}+2\sqrt{\frac{5}{3}}}$   		  &$\mathbf{\frac{4\frac{m}{n}}{1+4\frac{m}{n}+\frac{5}{\sqrt{2}}}}$       & $\frac{2\frac{m}{n}}{1+2\frac{m}{n}+3\sqrt{\frac{2}{11}}}$         \\ \hline\hline
$k=8$ & $\frac{3\frac{m}{n}}{1+3\frac{m}{n}+2\sqrt{\frac{11}{7}}}$ 		  & $\mathbf{\frac{4\frac{m}{n}}{1+4\frac{m}{n}+2\sqrt{3}}}$                   &$\frac{2\frac{m}{n}}{1+2\frac{m}{n}+\sqrt{\frac{5}{3}}}$         \\ \hline
\end{tabular}
\label{weakRIP}
\end{table}
Using the same arguments as in proof of Theorem $1$ in \cite{foucart2010note} and Table \ref{weakRIP} we conclude that 
every $k$-sparse vector $x\in\mathbb{R}^{n}$ satisfying $A x = \hat b$ is the unique solution of \eqref{onenormform} if
\[
\dt_{2k}<\frac{3\frac{m}{n}}{1+3\frac{m}{n}+\sqrt{6}}.
\]
This completes the proof.
\end{proof}
Comparing the two bounds of the RIP constants in Theorems \ref{thm:RIP_bound} and \ref{thm:RIP_bound2} we observe that the former is smaller, see Figure \ref{ripfigcom}.
For the purpose of the proposed preconditioner, discussed in Section~\ref{sec:PCG}, the smaller bound on $\delta_{2k}$ in Theorem \ref{thm:RIP_bound2}
results in tighter bounds of the spectral properties of the preconditioned systems. The former is an advantage of property \textbf{P2} against RIP in (\ref{eq:rip1}), proved in Lemma \ref{lem:ne2_prec}.
However, property \textbf{P2} and Theorem \ref{thm:RIP_bound2} result in a limitation of the maximum number of sparsity $k$ for which problem \eqref{onenormform}
guarantees an exact recovery of the sparsest solution of $Ax= \hat b$. Fortunately, both results in Theorems \ref{thm:RIP_bound} and \ref{thm:RIP_bound2} are rather
pessimistic. It has been shown in \cite{BlanchardCartis} that RIP conditions of the form (\ref{eq:rip1}) and their scaled versions (\textbf{P2}) or (\ref{bfkim2}) provide worst case scenarios
of $\delta_{2k}$ and consequently of the sparsity level $k$ such that problem \eqref{onenormform} guarantees exact sparse recovery. To support the former argument,
we refer the reader to \cite{DonohoJarred:PUT}, where it is shown that for Gaussian measurement matrices the average maximum sparsity level $k$ that is guaranteed to be reconstructable by \eqref{onenormform}
is much greater than the one shown in Theorems \ref{thm:RIP_bound} and \ref{thm:RIP_bound2}. 
Moreover, it has been shown empirically in \cite{DonohoJarred:PUT} that approximately the same result holds for various types of measurement matrices $A$, i.e. partial Fourier, partial Hadamard, Bernoulli etc. 
In Subsection~\ref{optransition} it is shown that the proposed algorithm satisfies approximately the average maximum sparsity $k$ shown in \cite{DonohoJarred:PUT}.
Therefore, we conclude that by replacing RIP (\ref{eq:rip1}) with property \textbf{P2}:
\begin{itemize}
\item improved bounds on the spectral properties of the preconditioned systems in Section~\ref{sec:PCG} are obtained,
\item a better approximation of matrix $B^\mathsf{T}B$ with a scaled diagonal $\rho I$ by choosing appropriate constant $\rho$ is possible and
\item the empirical average reconstruction properties as shown in Subsection~\ref{optransition} are maintained.
\end{itemize}
\begin{figure}%
\centering
\subfloat{%
\includegraphics[scale=0.45]{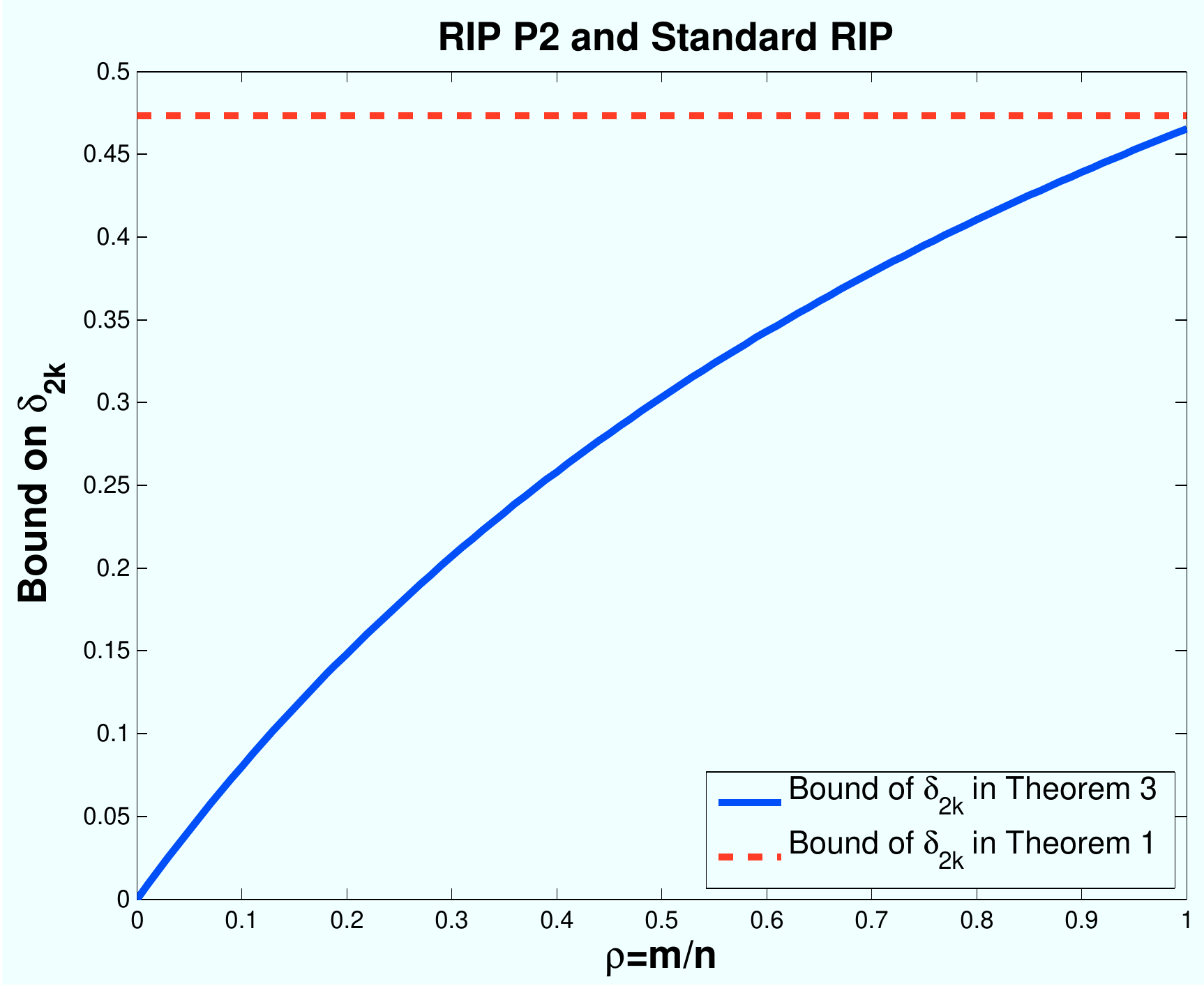}}
\caption{Comparison on the bounds of $\delta_{2k}$ constants from Theorems \ref{thm:RIP_bound} and \ref{thm:RIP_bound2}}
\label{ripfigcom}%
\end{figure}
\section{Primal--Dual Problems in Matrix-free IPM}\label{sec:SepForm}
Non-smooth Basis Pursuit \eqref{onenormform} and Basis Pursuit Denoising \eqref{formulationsnoisy1} optimization problems can be reformulated into equivalent linear and convex quadratic problems, respectively. This is achieved via linearization of the non-smooth $\ell_1$-norm in the objective function.

After reformulating the BPDN problem (\ref{formulationsnoisy1}) to (\ref{wrightBPDN}) as proposed in \cite{IEEEhowto:gpsr} for GPSR algorithm, we solve the latter using a primal--dual IPM. The reader interested in the theory of primal-dual IPMs is referred to the book of Wright \cite{IEEEhowto:wrightbook}. Aspects of practical implementation have been addressed in a recent survey \cite{IEEEhowto:JG-XXV}. A description of the primal-dual IPM used in this paper is given in Section~\ref{sec:pdstipm}. For the primal problem \eqref{wrightBPDN} of interest the dual is
\begin{equation}\label{dualquadraticBPDN}
\begin{array}{llll}
\multirow{4}{*}{Dual Sep.:} \hspace{9mm} & \displaystyle\max_{z,s\in\mathbb{R}^{2n}} & -\frac{1}{2} z^\mathsf{T}FF^\mathsf{T} z \\
& \mbox{s.t.:} & c + FF^\mathsf{T}z -s = 0 & \\
& & z,s \ge 0 &
\end{array}
\end{equation}
where
$
\begin{array}{c}
 c = \left[
 \begin{array}{c}
     \tau1_n - A^\mathsf{T} b \\
     \tau1_n + A^\mathsf{T} b
 \end{array} 
 \right]\in\mathbb{R}^{2n}.
\end{array}
$

At each step of the primal--dual IPM applied to the primal-dual pair \eqref{wrightBPDN} and \eqref{dualquadraticBPDN} the corresponding Newton direction $(\Delta z,\Delta s)$ is computed by solving the following system of linear equations:
\begin{equation}\label{sepregaugeq}
\setlength{\extrarowheight}{0.3ex}
\left[\begin{array}{cc}
FF^\mathsf{T}& -I_{2n}\\
S& Z \\
\end{array}\right]
\times
\left[\begin{array}{c}
\Delta z \\ \Delta s \\
\end{array}\right]
=
\left[\begin{array}{c}
f_z \\ f_s \\
\end{array}\right],
\end{equation}
where $S$ and $Z$ are diagonal matrices with vectors $s$ and $z$ on the diagonal, respectively, $I_{2n}$ denotes an identity matrix of dimension $2n$ and 
\begin{equation}\label{fzfs}
 f_z   = s - c - FF^\mathsf{T}z, \quad
 f_s   = \sigma\mu e - ZS1_{2n},
\end{equation}
$\mu={z^\mathsf{T}s}/{(2n)}$ is the barrier term of the IPM and $0\le\sigma\le 1$ the centering parameter.
In the matrix-free framework the dual variables $\Delta s$ in \eqref{sepregaugeq} are eliminated to get:
\begin{subequations}\label{redNewton}
\begin{equation}
\label{redNewtonA}
(\Theta^{-1} + FF^\mathsf{T})\Delta z = f_z + Z^{-1}f_s,
\end{equation}
\begin{equation}
\label{redNewtonB}
\hspace{2.9cm} \Delta s =  Z^{-1}f_s - \Theta^{-1}\Delta z.
\end{equation}
\end{subequations}
where $\Theta = S^{-1}Z \in \mathbb{R}^{2n \times 2n}$. The reduced Newton system (\ref{redNewtonA}), also known as augmented system, is solved by an appropriate preconditioned iterative method for which \textit{only matrix-vector product with the constraint matrix $F$ is allowed}. Thus, the matrix-free IPM approach has two major components:
\begin{itemize}
\item iterative solver for the augmented system,
\item special-purpose preconditioner that exploits matrix structure.
\end{itemize}
The next section addresses these two issues.

\section{Preconditioned Conjugate Gradient Method}\label{sec:PCG}
The system \eqref{redNewtonA} has a symmetric positive definite matrix and the conjugate gradient (CG) method can be employed to solve it in a matrix-free regime. 
However, the convergence of the CG method can be too slow when a matrix is ill-conditioned and/or its eigenvalues are not clustered. 
In this section we discuss an efficient spectrally-equivalent diagonal matrix preconditioner for (\ref{redNewtonA}). 
In particular, we give theoretical and practical justification of our approach to fast iterative solution of the system.

The proposed preconditioner for the system of equations \eqref{redNewtonA} is based on the exploitation of general properties of CS matrices and the behavior of the $\Theta$ matrix in \eqref{redNewtonA} close to optimality. 
Let us recall that in the notation of primal--dual pair \eqref{wrightBPDN}--\eqref{dualquadraticBPDN}, variable $s \in \mathbb{R}^{2n}$ is a Lagrange multiplier associated with the non-negativity constraint $z \geq 0$. Hence, at optimality $s_j z_j = 0$ $\forall\, j = 1,2,\dots, 2n$. IPMs force the convergence to the optimal solution by perturbing this condition $s_j z_j = \mu$ $\forall\, j$, where $\mu$ is the barrier term of the IPM, 
and gradually reducing the perturbation $\mu$ to zero. At optimality indices $j \in \{ 1, 2,\dots, 2n \}$ are split into two disjoint sets: 
\begin{equation}
\begin{gathered}
\mathcal{B} = \{j \ | \ z_j \to z_j^{*} > 0, s_j \to s_j^{*} = 0 \}\\ 
\mbox {and}\\
\mathcal{N} = \{j \ | \ z_j \to z_j^{*} = 0, s_j \to s_j^{*} > 0 \}
\end{gathered} 
\end{equation}
that determine the activity of constraints. This partitioning has highly undesirable consequences for the diagonal scaling matrix $\Theta = S^{-1} Z$. Indeed, when $\mu$ approaches zero, for indices $j \in \mathcal{B}$, $\Theta_j$ goes to infinity and for indices $j \in \mathcal{N}$, $\Theta_j$ 
goes to zero. 

Recall that $z=[u\;;\;v]$, where $u$ and $v$ are the positive and negative components of vector $x$ (see \eqref{uvDef}), respectively. For sparse signals there are merely $k$ ($k \ll 2n$) nonzero components in the optimal solution. The positive ones will contribute a nonzero element in $u$ and the negative ones will contribute a nonzero element in $v$. At optimality the cardinality of set $\mathcal{B}$ is $k$. Hence, at later iterations of an IPM
\begin{equation}\label{eq:thetas_split}
\begin{aligned}
&\Theta_i \gg 1\quad \forall\,i\in\mathcal{B},\quad\; \card{\mathcal{B}}=k,\\
&\Theta_i \ll 1\quad \forall\,i\in\mathcal{N},\quad \card{\mathcal{N}}=2n-k.
\end{aligned} 
\end{equation}

Let us now return to the question of preconditioning of the system of equations \eqref{redNewtonA}. Its matrix is
\begin{equation}\label{eq:ne2_matrix}
H = \Theta^{-1} + FF^\mathsf{T}.
\end{equation}
The behavior of matrix $\Theta$ near optimality is described by \eqref{eq:thetas_split}. It is clear that matrix $\Theta^{-1}$ has many large entries and only few small entries well before the IPM reaches the optimal solution. Let us introduce a number $C\gg1$ that separates entries of $\Theta^{-1}$ of different magnitudes:
\begin{equation}\label{eq:separator_C}
\#(\Theta_{j}^{-1}<C)=l.
\end{equation}
Here $l$ is just the number of small entries in $\Theta^{-1}$ and may be different from the sparsity $k$ of the optimal solution. In the regime $l < m$, the second term $FF^\mathsf{T}$, whose rank is exactly $m$, works as a low-rank pertubation for the matrix $\Theta^{-1}$ in \eqref{eq:ne2_matrix}. Since, in Frobenius norm the first term $\Theta^{-1}$ dominates the second term $FF^\mathsf{T}$, we propose to replace $FF^\mathsf{T}$ in the preconditioner by a simple approximant. First, let us write system's matrix of \eqref{redNewtonA} in the block form by using the facts that $\Theta=\diag(\Theta_u, \Theta_v)$ and $F^\mathsf{T} = [A \ -A]$:
\begin{equation}\label{eq:ne2_H}
H =
\begin{bmatrix}
\Theta_u^{-1} & \\
& \Theta_v^{-1}
\end{bmatrix}
+
\begin{bmatrix}
A^\mathsf{T}A & -A^\mathsf{T}A \\
-A^\mathsf{T}A & A^\mathsf{T}A \\
\end{bmatrix}.
\end{equation}
Our preconditioner is based on the approximation of $A^\mathsf{T}A$ by the closest (in Frobenius norm) scaled identity matrix $\rho I_n$, $\rho=m/n$:
\begin{equation}\label{eq:ne2_P}
P=
\begin{bmatrix}
\Theta_u^{-1}+\rho I_n & -\rho I_n \\
-\rho I_n & \Theta_v^{-1}+\rho I_n \\
\end{bmatrix}.
\end{equation}

To simplify the analysis of the preconditioner, we first consider the case of $n\times n$ matrices $H$ and $P$ rather than block $2n\times 2n$ ones as defined by \eqref{eq:ne2_H} and \eqref{eq:ne2_P}. The following lemma establishes spectral properties of the preconditioned matrix $P^{-1}H$ in the non-block case.

\begin{lemma}\label{lem:ne2_prec}
Define matrix $H$ as
\[
H = \Theta^{-1} + A^\mathsf{T}A,
\]
where $\Theta=\diag(\Theta_1,\Theta_2,\ldots,\Theta_n)$ --- diagonal $n\times n$ matrix with $\Theta_j>0$, and $A$ --- $m\times n$ matrix with $m \leq n/2$. Let $C$ be any positive constant and $l$ be defined as in (\ref{eq:separator_C}), $\#(\Theta_{j}^{-1}<C)=l$. Additionally, let $A$ satisfy property \textbf{P2} defined on page \pageref{P2} for $k=l$ with some constant $\delta_l$. If matrix $A$ has orthonormal rows (\ref{eq:orthonormality}),
then the eigenvalues of matrix $H$ preconditioned by matrix $P$:
\[
P = \Theta^{-1} + \rho I_n,\quad \rho=m/n
\]
are clustered around 1, i.e.
\begin{equation}\label{eq:clustering}
|\lambda-1|\leq\dt_l+\frac{1}{4}\frac{(3-\rho)^2}{\rho\dt_l C}\quad\forall\, \lambda\in\spec(P^{-1}H),
\end{equation}
If matrix $A$ has nearly orthonormal rows, i.e. satisfies \textbf{P1} defined on page \pageref{P1}, then
\[
|\lambda-1|\leq  \dt_l+\frac{1}{4}\frac{(1+\delta-\rho+2\sqrt{1+\delta})^2}{\rho\dt_l C}\quad\forall\, \lambda\in\spec(P^{-1}H),
\] 
where $\delta$ has been defined in \textbf{P1}.
\end{lemma}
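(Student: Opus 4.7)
The plan is to reduce the eigenvalue bound to a Rayleigh-quotient bound on a symmetric matrix, split the ambient $\mathbb{R}^n$ according to the separator $C$, apply property \textbf{P2} on the "small" block and orthonormality (or \textbf{P1}) on its complement, and finally combine the pieces using the fact that $v$ is a unit vector, which couples the norms of the two blocks.

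First, since $P \succ 0$, $\spec(P^{-1}H) = \spec(P^{-1/2}HP^{-1/2})$ and
\[
P^{-1/2}HP^{-1/2} = I_n + P^{-1/2}(A^\mathsf{T}A - \rho I_n)P^{-1/2}.
\]
Hence every eigenvalue $\lambda$ satisfies $|\lambda - 1| \le \max_{\|v\|_2=1}|\|Au\|_2^2 - \rho\|u\|_2^2|$ with $u := P^{-1/2}v$. I would then split the indices as $\mathcal{S} = \{j : \Theta_j^{-1} < C\}$ (so $|\mathcal{S}|=l$) and $\mathcal{L}$ = its complement, and write $u = u_\mathcal{S} + u_\mathcal{L}$. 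Because $u_\mathcal{S}$ is $l$-sparse, property \textbf{P2} applied to the $\mathcal{S}$-column submatrix of $A$ gives $\|Au_\mathcal{S}\|_2^2 \in \rho(1\pm\dt_l)\|u_\mathcal{S}\|_2^2$. On $\mathcal{L}$, orthonormality of the rows of $A$ yields $\|Au_\mathcal{L}\|_2 \le \|u_\mathcal{L}\|_2$; under \textbf{P1} this becomes $\|Au_\mathcal{L}\|_2 \le \sqrt{1+\delta}\,\|u_\mathcal{L}\|_2$.

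Expanding the Rayleigh quotient,
\[
\|Au\|_2^2 - \rho\|u\|_2^2 = \bigl(\|Au_\mathcal{S}\|_2^2 - \rho\|u_\mathcal{S}\|_2^2\bigr) + 2\langle Au_\mathcal{S}, Au_\mathcal{L}\rangle + \bigl(\|Au_\mathcal{L}\|_2^2 - \rho\|u_\mathcal{L}\|_2^2\bigr),
\]
and applying Cauchy--Schwarz to the cross term, one gets (setting $p := \|u_\mathcal{S}\|_2$, $q := \|u_\mathcal{L}\|_2$)
\[
|\lambda - 1| \le \rho\dt_l\, p^2 + 2\sqrt{\rho(1+\dt_l)(1+\delta)}\, pq + (1+\delta-\rho)\, q^2,
\]
where the third term is the larger of the possible excursions $\|Au_\mathcal{L}\|_2^2 - \rho\|u_\mathcal{L}\|_2^2 \in [-\rho, 1+\delta-\rho]\|u_\mathcal{L}\|_2^2$ (the $\delta=0$ expression recovers the orthonormal case).

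Finally, the unit-norm constraint on $v$ translates into $\rho p^2 + (C+\rho) q^2 \le \sum_j P_{jj} u_j^2 = \|v\|_2^2 = 1$, since $P_{jj} \ge \rho$ on $\mathcal{S}$ and $P_{jj} \ge C+\rho$ on $\mathcal{L}$. Using this \emph{coupled} constraint (rather than the independent bounds $p^2 \le 1/\rho$, $q^2 \le 1/C$, which only yield $\mathcal{O}(1/\sqrt{C})$) is essential to obtain the $\mathcal{O}(1/C)$ term. Parametrising $\sqrt{\rho}\,p = \cos\theta$, $\sqrt{C+\rho}\,q = \sin\theta$, rewriting the quadratic as $A + B\cos 2\theta + D\sin 2\theta$, maximising via the amplitude formula $\sqrt{B^2 + D^2}$, and applying $\sqrt{X^2+Y} \le X + Y/(2X)$ produces an upper bound of the form $\dt_l + \mathrm{const}/(\dt_l C)$. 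The main obstacle will be the bookkeeping: collecting the $\sqrt{1+\delta}$ factors so that the constant simplifies to $(1+\delta-\rho+2\sqrt{1+\delta})^2/4$ (equivalently $((1+\sqrt{1+\delta})^2 - 1 - \rho)^2/4$), which specialises to $(3-\rho)^2/4$ for $\delta = 0$. A symmetric bound for $-(\lambda - 1)$ is obtained by replacing $(1+\delta-\rho)q^2$ with $\rho q^2$ (using $\|Au_\mathcal{L}\|_2^2 \ge 0$); the two bounds combine to give the stated $|\lambda - 1|$ estimate.
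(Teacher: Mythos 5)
Your proposal is correct and follows essentially the same route as the paper: your symmetrized Rayleigh-quotient bound $|\lambda-1|\le |u^\mathsf{T}(A^\mathsf{T}A-\rho I)u|/(u^\mathsf{T}Pu)$ is exactly the paper's manipulation of $(H-P)v=\tau Pv$, and the block split by the separator $C$, the use of \textbf{P2} on the $l$ "small-$\Theta^{-1}$" columns, \textbf{P1} (or orthonormality) on the complement, Cauchy--Schwarz on the cross term, and the crucial weighted denominator $\rho\al+C(1-\al)$ that yields the $\mathcal{O}(1/C)$ rather than $\mathcal{O}(1/\sqrt{C})$ correction all coincide with the paper's argument. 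The only (immaterial) difference is the final optimization: you maximize the 2$\times$2 quadratic form exactly via the amplitude formula and then expand $\sqrt{B^2+D^2}$, whereas the paper first relaxes $(1-\al)$ and $\sqrt{\al(1-\al)}$ to $\sqrt{1-\al}$ and then applies the majorization $\sqrt{x}\le C_1x+C_2$ for $4C_1C_2\ge 1$ — both land on the stated constant $\dt_l+\xi^2/(4\rho\dt_l C)$ with $\xi=1+\delta-\rho+2\sqrt{1+\delta}$.
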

\begin{proof}
Let $C$ be any positive constant, then the following two disjoint sets of indices can be defined:
\[
{\mathcal{B}_C} = \{j\in\{1,2,\ldots,n\}:\; \Theta_{j}^{-1} < C\},\quad {\mathcal{N}_C} = \{1,2,\ldots,n\}\setminus{\mathcal{B}_C}
\]
Let $B$ and $N$ be matrices of columns of $A$ with indices from ${\mathcal{B}_C}$ and ${\mathcal{N}_C}$, respectively. Without loss of generality we can assume that ${\mathcal{B}_C}$ are the first $l$ indices, then
\[
A = [B \; N],\quad B\in\mathbb{R}^{m\times l},\quad N\in\mathbb{R}^{m\times (n-l)}.
\]

Let $\lambda$ be an eigenvalue of the preconditioned matrix $P^{-1}H$ corresponding to an eigenvector $v=[v_{\mathcal{B}_C}\;;\;v_{\mathcal{N}_C}]$ of norm one, then
\begin{equation}
P^{-1}Hv=\lambda v\quad\Longleftrightarrow\quad(H-P)v=\tau Pv,\quad \tau=\lambda-1,
\end{equation}
or, in the block form,
\begin{equation}\label{eq:precH1}
\setlength{\extrarowheight}{0.3ex}
\left[\begin{array}{c|c}
B^\mathsf{T}B-\rho I_l & B^\mathsf{T}N \\
\hline\vspace{0.01cm}
N^\mathsf{T}B & N^\mathsf{T}N-\rho I_{n-l}
\end{array}\right]
\left[\begin{array}{c}
v_{\mathcal{B}_C} \\
\hline
v_{\mathcal{N}_C}
\end{array}\right]
=
\tau
\left[\begin{array}{c|c}
\Theta_{\mathcal{B}_C}^{-1}+\rho I_l & 0 \\
\hline
0 & \Theta_{\mathcal{N}_C}^{-1}+\rho I_{n-l}
\end{array}\right]
\left[\begin{array}{c}
v_{\mathcal{B}_C} \\
\hline
v_{\mathcal{N}_C}
\end{array}\right]
\end{equation}
Obviously, eigenvalues of $P^{-1}H$ are all real, hence $\tau$ is also real. Multiplication of \eqref{eq:precH1} by $[v_{\mathcal{B}_C}\;;\;v_{\mathcal{N}_C}]^\mathsf{T}$ from the left gives
\begin{multline}\label{eq:tau1}
\tau\Bigl[v_{\mathcal{B}_C}^\mathsf{T}\left(\Theta_{\mathcal{B}_C}^{-1}+\rho I_l\right)v_{\mathcal{B}_C}
+
v_{\mathcal{N}_C}^\mathsf{T}\left(\Theta_{\mathcal{N}_C}^{-1}+\rho I_{n-l}\right)v_{\mathcal{N}_C}\Bigr] 
=\\
v_{\mathcal{B}_C}^\mathsf{T}\left(B^\mathsf{T}B-\rho I_l\right)v_{\mathcal{B}_C}
+
v_{\mathcal{N}_C}^\mathsf{T}\left(N^\mathsf{T}N-\rho I_{n-l}\right)v_{\mathcal{N}_C}
+
2v_{\mathcal{B}_C}^\mathsf{T}B^\mathsf{T}Nv_{\mathcal{N}_C}.
\end{multline}

Let us denote $\norm2{v_{\mathcal{B}_C}}^2$ by $\al$, then $\norm2{v_{\mathcal{N}_C}}^2=1-\al$ since $v=[v_{\mathcal{B}_C}\;;\;v_{\mathcal{N}_C}]$ has unit norm. Bounding left hand side of \eqref{eq:tau1} from below is trivial:
\begin{equation}\label{eq:boundLHS}
\Bigl|\tau\Bigl[v_{\mathcal{B}_C}^\mathsf{T}\left(\Theta_{\mathcal{B}_C}^{-1}+\rho I_l\right)v_{\mathcal{B}_C}
+
v_{\mathcal{N}_C}^\mathsf{T}\left(\Theta_{\mathcal{N}_C}^{-1}+\rho I_{n-l}\right)v_{\mathcal{N}_C}\Bigr]\Bigr|
\geq
|\tau|\Bigl(\rho\al
+
C(1-\al)\Bigr).
\end{equation}

Next, let us bound right hand side of \eqref{eq:tau1} from above. We will distinguish two cases, orthonormal and nearly orthonormal rows of matrix $A$. First, we study the case of nearly orthonormal rows of matrix $A$. For this purpose we will use the SVD decompositions of matrices $B$ and $N$:
\[
B=U_B\Sigma_{B}V_{B}^\mathsf{T},\quad
\Sigma_{B}=
\left[\begin{array}{c}
\diag(\sigma_1,\sigma_2,\ldots,\sigma_l) \\
\hline
O_{m-l\times l} \\
\end{array}\right]
\]
and 
\[
N = U_N\Sigma_{N}V_{N}^\mathsf{T},\quad
\Sigma_{N}=
\left[\begin{array}{c|c}
\diag(\varsigma_1,\varsigma_2,\ldots,\varsigma_{m}) & O_{m\times (n-m-l)}
\end{array}\right].
\]
Restricted isometry property \textbf{P2} implies that
\[
\sigma_1^2\leq\rho(1+\dt_l),\quad \sigma_l^2\geq\rho(1-\dt_l).
\]
First, notice that
\begin{equation}\label{eq:boundBTB}
\Bigl|v_{\mathcal{B}_C}^\mathsf{T}\left(B^\mathsf{T}B-\rho I_l\right)v_{\mathcal{B}_C}\Bigr|\leq\rho\dt_l\al.
\end{equation}
Using property \textbf{P1} we have
\begin{align}\label{bdsvdN}
\|AA^\mathsf{T} - I_m\| _2 & \le \delta & \Longleftrightarrow \nonumber \\
\|AA^\mathsf{T}\|_2            & \le 1+\delta & \Longleftrightarrow \nonumber \\
\|BB^\mathsf{T} + NN^\mathsf{T}\|_2 & \le 1+\delta & \Longrightarrow \nonumber \\
\|NN^\mathsf{T}\|_2 & \le 1+\delta &\Longleftrightarrow \nonumber \\
\varsigma_1^2 & \le 1+\delta. & 
\end{align}
Next, using (\ref{bdsvdN}) obtain
\[
\norm2{N^\mathsf{T}N-\rho I_{n-l}}\le\max\{\rho,1+\delta-\rho\}=1+\delta-\rho,\quad (\rho = m/n \leq 0.5)
\]
and, hence,
\begin{equation}\label{eq:boundNTN}
\Bigl|v_{\mathcal{N}_C}^\mathsf{T}\left(N^\mathsf{T}N-\rho I_{n-l}\right)v_{\mathcal{N}_C}\Bigr|\leq\Bigl(1+\delta-\rho\Bigr)\Bigl(1-\al\Bigr).
\end{equation}
Finally,
\[
\norm2{B^\mathsf{T}N}\le \|B\|_2\|N\|_2\le \sigma_1\varsigma_1=\sqrt{1+\delta}\sqrt{\rho(1+\dt_l)}< \sqrt{(1+\delta)}
\]
because our assumptions $m \leq n/2$ and $\dt_l<1$ imply $\sigma_i^2\leq\sigma_1^2\leq\rho(1+\dt_l)<1$. We conclude that
\begin{equation}\label{eq:boundBTN}
\Bigl|2v_{\mathcal{B}_C}^\mathsf{T}B^\mathsf{T}Nv_{\mathcal{N}_C}\Bigr|<2\sqrt{1+\delta}\sqrt{\al(1-\al)}.
\end{equation}

Bounds \eqref{eq:boundNTN} and \eqref{eq:boundBTN} are sharp and can be used to obtain very tight estimate on $\tau$ but we do not need them that sharp to obtain a sufficiently good estimate. So, we will release them a little bit to simplify the analysis:
\begin{equation}\label{eq:boundNTN_BTN}
\begin{aligned}
&\Bigl|v_{\mathcal{N}_C}^\mathsf{T}\left(N^\mathsf{T}N-\rho I_{n-l}\right)v_{\mathcal{N}_C}\Bigr|\le(1+\delta-\rho)(1-\al)\leq(1+\delta-\rho)\sqrt{1-\al},\\
&\Bigl|2v_{\mathcal{B}_C}^\mathsf{T}B^\mathsf{T}Nv_{\mathcal{N}_C}\Bigr|<2\sqrt{1+\delta}\sqrt{\al(1-\al)}\leq2\sqrt{1+\delta}\sqrt{1-\al}.
\end{aligned}
\end{equation}

Using \eqref{eq:boundLHS} and \eqref{eq:boundBTB} and \eqref{eq:boundNTN_BTN} we finally get
\begin{equation}\label{eq:tau2}
|\tau|\le \frac{\rho\dt_l\al+(1+\delta-\rho+2\sqrt{1+\delta})\sqrt{1-\al}}{\rho\al+C(1-\al)}\leq\dt_l(1+\varepsilon).
\end{equation}
Let us denote $\xi=(1+\delta-\rho+2\sqrt{1+\delta})$ and show that $\varepsilon$ is small for large values of $C$. Indeed \eqref{eq:tau2} implies that 
\[
\xi\sqrt{1-\al}\leq\dt_l\Bigl(C+C\varepsilon-\rho\varepsilon\Bigr)(1-\al)+\rho\dt_l\varepsilon.
\]
It can be checked by simple calculus, that $\sqrt{x}\leq C_1x+C_2$ on $[0,1]$ whenever $C_1\geq{1}/{(4C_2)}$. In our case this implies
\[
\frac{\dt_l}{\xi}\Bigl(C+C\varepsilon-\rho\varepsilon\Bigr)\geq
\frac{\xi}{4\rho\dt_l\varepsilon}.
\]
The largest solution of the quadratic equation in $\varepsilon$
\[
\frac{4\rho\dt_l^2}{\xi^2}\varepsilon\Bigl(C+C\varepsilon -\rho\epsilon \Bigr)=1
\]
is
\[
\varepsilon_{+}=\frac{1}{2}\cdot\frac{C}{C-\rho}\left(\sqrt{1+\frac{\xi^2}{\rho\dt_l^2 C}\cdot\frac{C-\rho}{C}}-1\right)\leq
\frac{\xi^2}{4\rho\dt_l^2 C}.
\]
Hence, it is sufficient to take any $\varepsilon\geq{\xi^2}/{(4\rho\dt_l^2 C)}$ to satisfy the inequality \eqref{eq:tau2}:
\begin{equation}\label{eq:clustering2}
\boxed{|\tau| \leq \dt_l+\frac{\xi^2}{4\rho\dt_l C}=   \dt_l+\frac{1}{4}\frac{(1+\delta-\rho+2\sqrt{1+\delta})^2}{\rho\dt_l C}}.
\end{equation}
This completes the proof for matrix $A$ which satisfies property \textbf{P1}. For the case of orthonormal rows of matrix $A$, i.e. $AA^\mathsf{T}=I_m$
simply set $\delta=0$ in property \textbf{P1} to get
\begin{equation}\label{eq:clustering4}
\boxed{|\tau| \leq \dt_l+\frac{1}{4}\frac{(3-\rho)^2}{\rho\dt_l C}}.
\end{equation}
This completes the proof.
\end{proof}

For the result of the theorem to be useful we obviously need the bound in the right-hand side of inequalities  in \eqref{eq:clustering2} and (\ref{eq:clustering4}) to be sufficiently smaller than one. Let us take a closer look at the terms forming this bound. We are free to choose any value for the constant $C$ we want, the larger the better. However, according to \eqref{eq:separator_C}, $l$ increases with the increase in $C$ and, consequently, the restricted isometry constant $\dt_l$ also increases. Inequalities \eqref{eq:clustering2} and (\ref{eq:clustering4}) hold for any value of $C$, hence we can replace it with
\begin{equation}\label{eq:clustering3}
|\tau|\leq \min_C\left(\dt_l+\frac{1}{4}\frac{(1+\delta-\rho+2\sqrt{1+\delta})^2}{\rho\dt_l C}\right)
\end{equation}
and
\begin{equation}\label{eq:clustering3}
|\tau|\leq \min_C\left(\dt_l+\frac{1}{4}\frac{(3-\rho)^2}{\rho\dt_l C}\right)
\end{equation}
and choose constant $C$ that delivers the minimum.

For number of measurements $m$ just a fraction $\rho=1/4$ of the length $n$ of the unknown signal, it is natural to assume the restricted isometry constant $\dt_{2l}$ to be less than $1/4$ (see Theorem \ref{thm:RIP_bound2}), hence, according to \cite{coraliaJarred}, $\dt_{2l}<1/4$, implies $\dt_l<1/4$. Therefore, to have $|\tau|\leq 17/20$ we need $C=20(0.75+\delta+2\sqrt{1+\delta})^2/3$ in \eqref{eq:separator_C}. For nearly orthonormal rows of matrix $A$ we can assume that $\delta\le1$, which gives us $C\approx 139.74$ and certainly holds near optimality in the IPM. For orthonormal rows of matrix $A$ we have $\delta=0$, hence, $C\approx 50.41$.

The bounds in \eqref{eq:clustering2} and (\ref{eq:clustering4}) are rather pessimistic. Computational experience suggests that eigenvalues of the preconditioned matrix get well clustered around 1 as long as $l=\#(\Theta_{j}^{-1}<1)$ is such that the RIP constant $\dt_l <1$. For example, for the discrete cosine (DCT) matrix with $n=2^{10}$ and $m=2^8$ the corresponding $l\leq 74$ (this number is obtained in a series of random tests).

Now we are ready to state the spectral properties of the preconditioned matrix $P^{-1}H$ for the system of equations \eqref{redNewtonA}. We leave the theorem without a proof as it a straightforward corollary of Lemma \ref{lem:ne2_prec}.

\begin{theorem}\label{thm:ne2_prec}
Let $H$ and $P$ be block matrices defined in \eqref{eq:ne2_H} and \eqref{eq:ne2_P}, respectively. 
Then the preconditioned matrix $P^{-1}H$ has
\begin{enumerate}
\item the eigenvalue $1$ of multiplicity $n$;
\item remaining $n$ eigenvalues defined in Lemma \ref{lem:ne2_prec} with $\Theta=\Theta_u+\Theta_v$.
\end{enumerate}
\end{theorem}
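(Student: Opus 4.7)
My plan is to work with the generalized eigenvalue problem $Hv = \lambda Pv$ rather than $P^{-1}H$ directly, and exploit the fact that both $H$ and $P$ share the block pattern
\[
H-\operatorname{diag}(\Theta_u^{-1},\Theta_v^{-1})=\begin{bmatrix}I_n\\-I_n\end{bmatrix}A^\mathsf{T}A\begin{bmatrix}I_n&-I_n\end{bmatrix},\quad
P-\operatorname{diag}(\Theta_u^{-1},\Theta_v^{-1})=\begin{bmatrix}I_n\\-I_n\end{bmatrix}\rho I_n\begin{bmatrix}I_n&-I_n\end{bmatrix}.
\]
Writing $v=[v_1;v_2]$ and $M=A^\mathsf{T}A-\lambda\rho I_n$, the equation $(H-\lambda P)v=0$ becomes the $2\times 2$ block system
\[
\bigl[(1-\lambda)\Theta_u^{-1}+M\bigr]v_1-Mv_2=0,\qquad -Mv_1+\bigl[(1-\lambda)\Theta_v^{-1}+M\bigr]v_2=0.
\]

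Adding the two block-rows gives $(1-\lambda)(\Theta_u^{-1}v_1+\Theta_v^{-1}v_2)=0$, which cleanly separates the spectrum into two pieces. The first case, $\lambda=1$, forces nothing on $v_1,v_2$ beyond the single equation $M(v_1-v_2)=0$, but already the $n$-dimensional subspace $\{[w;w]:w\in\mathbb{R}^n\}$ lies in the kernel of $H-P$ (a direct calculation shows that on such vectors $H$ and $P$ both act as $\operatorname{diag}(\Theta_u^{-1},\Theta_v^{-1})$). This accounts for the eigenvalue $1$ with multiplicity at least $n$, giving item~(1).

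For the remaining eigenvalues I use the complementary case $\Theta_u^{-1}v_1+\Theta_v^{-1}v_2=0$, i.e.\ $v_2=-\Theta_v\Theta_u^{-1}v_1$. Substituting into the first block-row and using $I+\Theta_v\Theta_u^{-1}=(\Theta_u+\Theta_v)\Theta_u^{-1}$, then introducing the change of variable $u=\Theta_u^{-1}v_1$, collapses the $2n\times 2n$ problem to the $n\times n$ generalized eigenproblem
\[
(A^\mathsf{T}A-\lambda\rho I_n)(\Theta_u+\Theta_v)u=(\lambda-1)u.
\]
Rearranging and left-multiplying by $(\Theta_u+\Theta_v)^{-1}$ yields
\[
\bigl(\Theta^{-1}+A^\mathsf{T}A\bigr)u=\lambda\bigl(\Theta^{-1}+\rho I_n\bigr)u,\qquad \Theta:=\Theta_u+\Theta_v,
\]
which is precisely the non-block setting of Lemma~\ref{lem:ne2_prec}. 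This produces the remaining $n$ eigenvalues and, since $\Theta_u$ and $\Theta_v$ are positive-definite so the reduction is invertible, they are exactly the $n$ eigenvalues described by the lemma, giving item~(2).

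The only delicate step is the algebraic reduction in the third paragraph: I must make sure that the change of variable $u=\Theta_u^{-1}v_1$ is a bijection on $\mathbb{R}^n$ (clear since $\Theta_u\succ 0$) and that counting is right — the $n$ eigenvalues from case one plus the $n$ from case two exhaust the $2n$ eigenvalues of $P^{-1}H$, with no overlap issues because the generalized eigenvectors of the two cases live in transverse $n$-dimensional subspaces of $\mathbb{R}^{2n}$. Once the reduction is in place, the bound on $|\lambda-1|$ is inherited verbatim from Lemma~\ref{lem:ne2_prec} applied with $\Theta=\Theta_u+\Theta_v$, so no further estimation is required.
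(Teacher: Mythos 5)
Your reduction is correct: the paper deliberately omits this proof (stating only that the theorem is a ``straightforward corollary'' of Lemma~\ref{lem:ne2_prec}), and the argument you give --- the $n$-dimensional subspace $\{[w;w]\}$ annihilated by $H-P$, plus the complementary case $\Theta_u^{-1}v_1+\Theta_v^{-1}v_2=0$ collapsing to the pencil $(\Theta^{-1}+A^\mathsf{T}A)w=\lambda(\Theta^{-1}+\rho I_n)w$ with $\Theta=\Theta_u+\Theta_v$ --- is exactly the intended route. The only cosmetic slip is in the last rearrangement: left-multiplying by $(\Theta_u+\Theta_v)^{-1}$ yields a pencil that is only similar to the stated one; the cleaner move is the substitution $w=(\Theta_u+\Theta_v)u=v_1-v_2$, which gives $(\Theta^{-1}+A^\mathsf{T}A)w=\lambda(\Theta^{-1}+\rho I_n)w$ directly, and in either case the eigenvalues are unchanged, so the conclusion stands.
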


Theorem \ref{thm:ne2_prec} establishes the clustering of eigenvalues of $P^{-1}H$ around $1$. Hence, iterative method such as conjugate gradient applied to the system of equations \eqref{redNewtonA} is expected to converge in just a few iterations if the preconditioner $P$ in \eqref{eq:ne2_P} is used. 
The latter theoretical results are also confirmed in practical experiments. Figure \ref{figEigs} 
demonstrates clustering of eigenvalues $\lambda(H)$ and $\lambda(P^{-1}H)$ in the case that the $A$ matrix in $H$ (\ref{eq:ne2_H}) is a Discrete Cosine Transform (DCT) matrix with normalized rows, $AA^T=I$.
The parameters for the size of the problem are set to $m=2^{10}$, $n=2^{12}$ and the sparsity level is fixed to $k=51$. In the left sub-Figure \ref{fig1theorem} the clustering of the eigenvalues $\lambda(H)$
is shown. Every vertical line presents the spreading of $\lambda(H)$ at a particular CG call as the matrix-free IPM progresses. 
One can observe that the clustering worsens as
the matrix-free IPM approaches optimality. On the contrary, eigenvalues of the preconditioned matrices $P^{-1}H$ show the opposite behavior. In particular, as the matrix-free IPM progresses eigenvalues $\lambda(P^{-1}H)$
start to cluster around one. The latter is depicted with the vertical columns 
in the right sub-Figure \ref{fig2theorem}.
\begin{figure}%
\centering
\subfloat[Unpreconditioned systems, $H$]{%
\label{fig1theorem}%
\includegraphics[width=59mm,height=50mm]{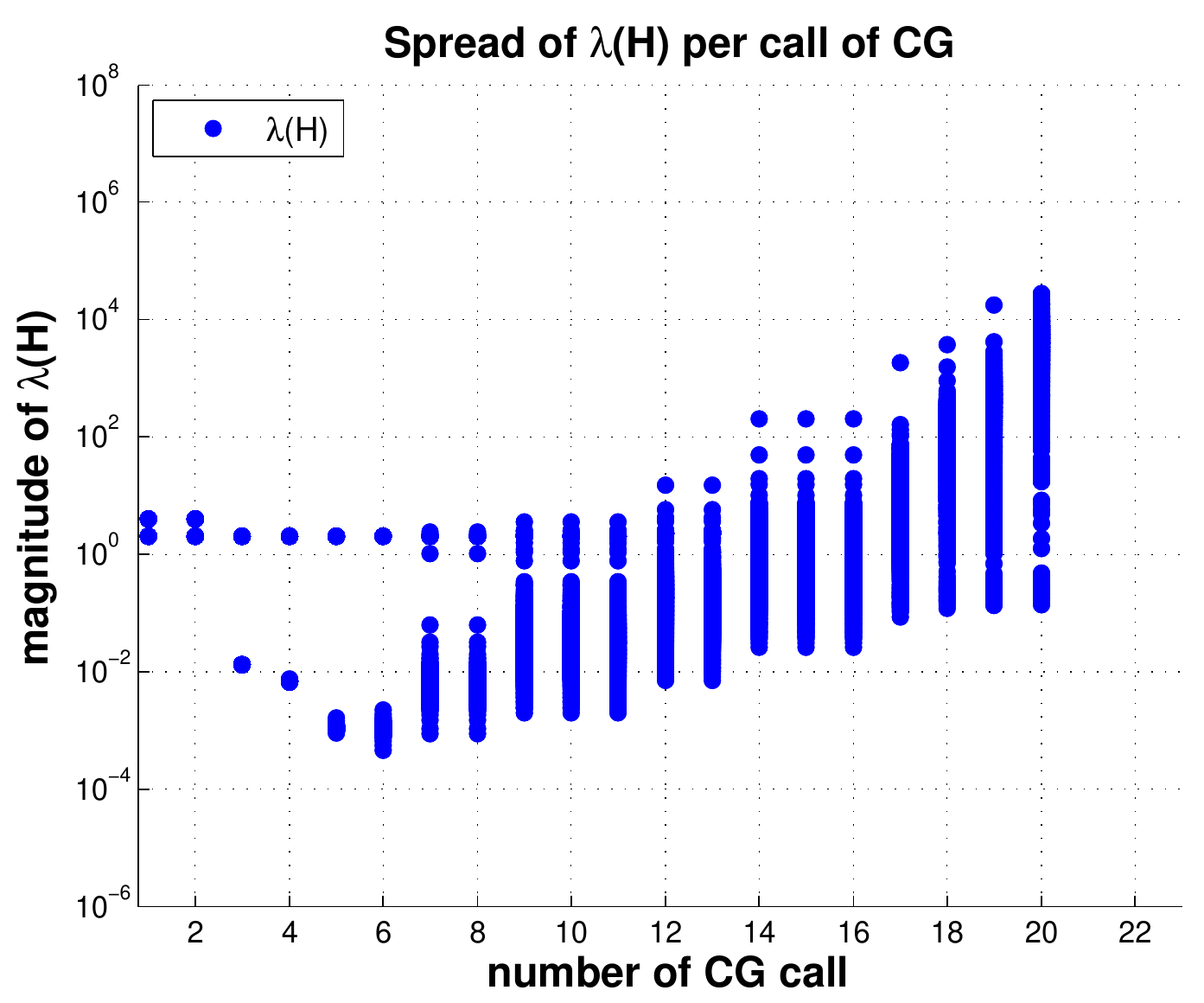}}
\subfloat[Preconditioned systems, $P^{-1}H$]{%
\label{fig2theorem}%
\includegraphics[width=59mm,height=50mm]{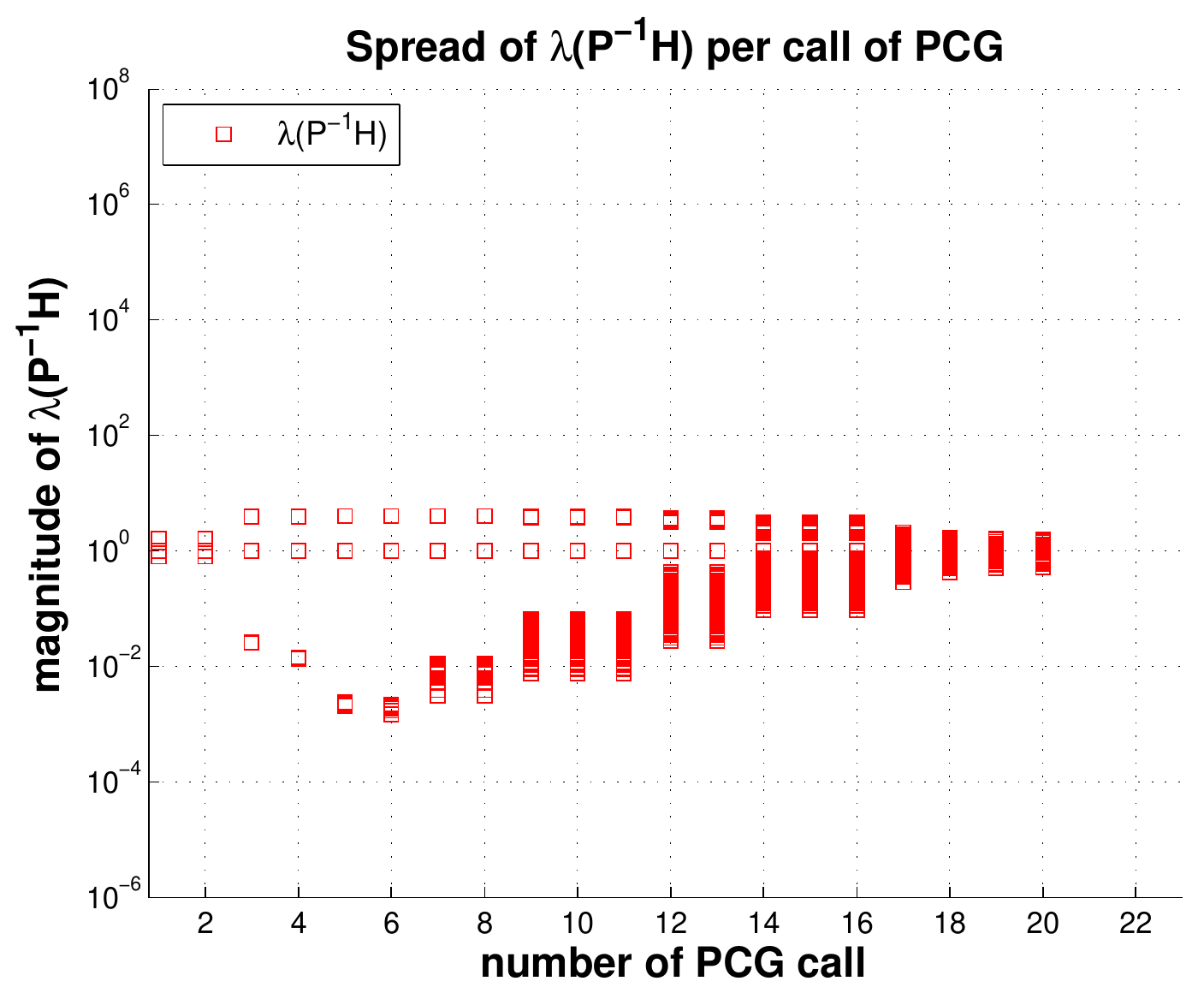}}\\
\caption{Clustering of the eigenvalues for the matrices $H$ and $P^{-1}H$
as the matrix-free IPM approaches optimality. The matrix $A$ in $H$ (\ref{eq:ne2_H}) is a DCT matrix with normalized rows.
The parameters of the problem set to $m=2^{10}$,$n=2^{12}$ and $k=51$. Twenty systems for the matrices
$H$ and $P^{-1}H$ are solved in total}
\label{figEigs}%
\end{figure}

\section{Computational Experience}\label{compexpsec}

We illustrate our developments by comparing the matrix-free 
IPM's efficiency with those of the state-of-the-art first-order methods, FPC\_AS and SPGL1
and with two other interior point based solvers, 
$\mathbf{\ell_1\_\ell_s}$ and PDCO.
The experiments are made on Sparco test suite \cite{IEEEhowto:sparco}.

We use the FPC\_AS CG version of FPC\_AS algorithm, where ``CG'' 
stands for the conjugate gradient method. The FPC\_AS CG has been shown 
in \cite{IEEEhowto:fpc1} to be considerably faster than other versions of FPC 
and FPC\_AS software packages. The FPC\_AS CG solves 
problem (\ref{formulationsnoisy1}). The code of FPC\_AS CG package can be found 
at \url{http://www.caam.rice.edu/~optimization/L1/FPC_AS/}.
We use the SPGL1\_bp version of SPGL1 
software package for noiseless signals and the SPGL1\_bpdn version for noisy 
signals, where ``bp'' stands for basis pursuit and ``bpdn'' for basis pursuit 
denoising, respectively. The SPGL1\_bp solves problem 
(\ref{onenormform}) and the ``bpdn'' version solves problem (\ref{formulationsnoisy3}).
The code of SPGL1 package can be found at \url{http://www.cs.ubc.ca/labs/scl/spgl1}.
Those versions of the FPC\_AS and SPGL1 software packages were found to be faster 
and more accurate than other first-order methods mentioned in Subsection~\ref{benchsubsec}. Therefore, GPSR and NestA solvers are excluded from the comparison.
The $\ell_1\_\ell_s$ solver implements problem (\ref{formulationsnoisy1}),
it can be found at \url{http://www.stanford.edu/~boyd/l1_ls/}.
The PDCO solver is used through the file SolveFasBP.m of SparseLab software 
package. The PDCO solver can be found at \url{http://www.stanford.edu/group/SOL/software/pdco.html}
and the SparseLab software package at \url{http://sparselab.stanford.edu/}.
The PDCO solver implements problems (\ref{onenormform}) and (\ref{formulationsnoisy1}).

In addition, three more experiments are performed. The first one tests the robustness of 
solvers matrix-free IPM, SPGL1\_bpdn,  FPC\_AS CG and $\ell_1\_\ell_s$, on problems of Sparco test suite, given a fixed level of noise. 
The second, replaces the core of matrix-free IPM, which is the preconditioned CG with a direct solver and shows how the CPU time required 
for reconstruction scales for each case. The third, 
demonstrates that the empirical phase transition properties of matrix-free IPM fit the theoretical average phase transition properties shown in \cite{DonohoJarred:PUT}.

All solvers used in this section, including the matrix-free IPM are MATLAB implementations. All experiments were performed
using MATLAB version R$2012$b ($8.0.0.783$) $64$-bit on a Dual $8$ Core Intel Xeon (Sandybridge) running Redhat Enterprise Linux in $64$-bit mode. Finally,
the RICE Wavelet toolbox, included in Sparco test suite, was compiled using gcc compiler version $4.4.6$ $20120305$ (Red Hat 4.4.6-4).
The matrix-free IPM, the data files and the MATLAB scripts used to generate the results in this section can be downloaded from \url{http://www.maths.ed.ac.uk/ERGO/mfipmcs/}.

Before proceeding to the following subsections it would be convenient for the reader to be 
familiarized with symbols and abbreviations used in the subsequent 
figures and comparison tables explained in Table~\ref{tablesymbabb}.
\begin{table}
\renewcommand{\arraystretch}{1.2}
\center
\caption{Symbols and abbreviations used in tables 
         and figures in Section "Computational Experience" \ref{compexpsec}}
	\begin{tabular}{|c|p{9cm}|}
	\hline
 		$m,n,k$&  number of rows and columns of the matrix A and the number of 
		nonzero elements in the optimal sparsest signal representation \\  \hline \hline
                $\hat{x}$ & optimal sparse representation \\  \hline \hline
                \multirow{2}{*}{$x_W$} & $x_{W_i}=x_i$ if $i\in W$,  \\
                               & otherwise $x_{W_i}=0$, where $W := \{i=1,2,\ldots,n \ | \ \hat{x}_i \neq 0 \}$ \\  \hline \hline
		\emph{r.e}($x_W$) & relative error $\|x_W-\hat{x}\|_2/\|\hat{x}\|_2$ \\  \hline \hline
		\emph{res}($x_W$) & residual $\|Ax_W-b\|_2$, where $b$ should be replaced with $\hat{b}$ in case of noiseless signals\\  \hline \hline
	         \emph{n1d}($x_W$) & distance from the optimal value of $\ell_1$-norm, $\Big|\|x_W\|_1 - \|\hat{x}\|_1\Big|$ \\  \hline \hline
	          \emph{obj}($x_W$) & objective value of BPDN problem, $\tau \|x_W\|_1 + \|Ax_W- b \|_2^2$ \\  \hline \hline
		\emph{nMat} & total number of matrix vector products $Ax$ and $A^\mathsf{T}y$\\ \hline
	\end{tabular}
	\label{tablesymbabb}
\end{table}

\subsection{Single centrality corrector primal-dual matrix-free IPM}\label{sec:pdstipm}
The implementation used in this paper is a single-corrector primal-dual IPM \cite{mybib:G-MCorr}. The original version proposed in \cite{mybib:G-MCorr} makes use of multiple centrality correctors, however,
after computational experimentation it was observed that a single corrector was enough for the fast convergence of the IPM in few iterations. In a standard multiple-corrector variant
at every iteration multiple centrality corrector directions are calculated, which are combined with a predictor direction in order to produce the final primal-dual direction  \cite{mybib:G-MCorr}. 
To compute the corrector and predictor directions one needs to solve multiple linear systems (\ref{redNewton}) where only the right
hand side varies. In case that a direct solver is used to solve the linear systems, the extra cost of solving several equations instead of one is negligible, because
the dominating cost is the decomposition of the matrix $(\Theta^{-1} + FF^\mathsf{T})$. However, this is not the case when iterative method (PCG) is used
to solve systems (\ref{redNewton}). In particular, the cost of calculating every term in composite direction is approximately the same.
In order to avoid the high cost of computing extra corrector directions at every iteration in our single-corrector matrix-free IPM we slightly bias the predictor direction to point to the central path
and perform corrector directions only when necessary. Like a long-step variant of primal-dual IPM \cite{IEEEhowto:wrightbook} this guarantees that at every iteration
the objective function is decreased rapidly while the algorithm maintains the small distance to the central path. 
As proposed in \cite{mybib:G-MCorr}, the criterion to decide whether a corrector direction is calculated is the value of the primal and dual step sizes.
When many biased predictor directions are performed the primal-dual iterates tend to approach the boundary of the feasible region. This results in small step sizes of the subsequent
iterations. When this happens a strong re-centering corrector is employed which pushes the next iteration to the vicinity of central path such that next step sizes are more likely to have large values.
Ideally, the values of the step sizes of the primal and dual directions should be bounded away from zero while global convergence of the method is guaranteed. 
This would allow fast practical convergence of matrix-free IPM, which translates into few iterations. Indeed, one can observe from the computational experience reported 
in Section~\ref{sec:cgcompdir} that $10$ to $20$ iterations of the matrix-free IPM is enough for convergence. This behaviour has been observed also in all computational
experiments discussed in Section~\ref{sec:comp}. We make our software available to the research community so that the interested reader can reproduce any numerical experiments from this paper. The pseudo-code of
the implemented single-corrector primal-dual matrix-free IPM follows.
\begin{algorithm}[H]
\begin{algorithmic}[1]
\vspace{0.1cm}
\STATE \textbf{Input} Choose $z^0, s^0 > 0$,  $0<\sigma_1<\sigma_2<\sigma_3\le 1$ and $0<\tilde{\alpha}<\bar{\alpha}<1$. For $k=1,2,\ldots$ generate $z^{k + 1}$ from $z^{k}$  and $s^{k+1}$ from $s^k$ according to the iteration: \vspace{0.25cm}
\WHILE{$\mbox{Duality Gap of (\ref{wrightBPDN}) and (\ref{dualquadraticBPDN})} \ge \epsilon$} \vspace{0.25cm}
\IF{$k \neq 1$ and ($\alpha_P^{k-1} \le \bar{\alpha}$ or $\alpha_D^{k-1} \le \bar{\alpha}$)}
\STATE $\sigma=\sigma_2$
\ELSE
\STATE $\sigma=\sigma_1$
\ENDIF \\ \vspace{0.25cm}
(* predictor step *)\vspace{0.1cm}
\STATE solve (\ref{redNewton}) using PCG with $\sigma$ and $z^k,s^k$ in (\ref{fzfs}) to obtain ($\Delta \bar{z}^k,\Delta \bar{s}^k$) \\\vspace{0.05cm}
	      choose primal and dual step sizes $\alpha_P^k,\alpha_D^k$ in $[0,1]$ as the largest values of $\alpha_P,\alpha_D$ such that 
               \begin{align*}
                 z^{k}(\alpha_P^k) = z^k + \alpha_P\Delta \bar{z}^k > 0 \\
                 s^{k}(\alpha_D^k) = s^k + \alpha_D\Delta \bar{s}^k > 0
               \end{align*} \\\vspace{0.25cm}
(* corrector step *)\vspace{0.1cm}
\IF{$\alpha_P^{k} \le \tilde{\alpha}$ or $\alpha_D^{k} \le \tilde{\alpha}$} \vspace{0.05cm}
\STATE  solve (\ref{redNewton}) using PCG with $\sigma=\sigma_3$ and $z^{k}(\alpha_P^k), s^{k}(\alpha_D^k)$ in (\ref{fzfs}) to obtain ($\Delta \tilde{z}^k,\Delta \tilde{s}^k$) \\\vspace{0.05cm}
               set ($\Delta z^{k},\Delta s^{k}$)=($\Delta \tilde{z}^k,\Delta \tilde{s}^k$)+($\Delta \bar{z}^k,\Delta \bar{s}^k$)\\\vspace{0.05cm}
	     choose primal and dual step sizes $\alpha_P^k,\alpha_D^k$ in $[0,1]$ as the largest values of $\alpha_P,\alpha_D$ such that 
               \begin{align*}
                 z^{k}(\alpha_P^k) = z^k + \alpha_P\Delta z^k > 0 \\
                 s^{k}(\alpha_D^k) = s^k + \alpha_D\Delta s^k > 0
               \end{align*} 
\ENDIF \vspace{0.25cm} 
\STATE set ($z^{k+1},s^{k+1}$)=($z^{k}(\alpha_P^k),s^{k}(\alpha_D^k)$)
\ENDWHILE
\end{algorithmic}
\caption*{Single-Corrector Primal-Dual Matrix-Free IPM}
\end{algorithm}
The input parameters $\sigma_1,\sigma_2$ are user-defined and control the centering bias of the predictor directions, while $\sigma_3$
parameter controls the strong centering in the corrector directions.
For all experiments they have been set to $\sigma_1=0.1$, $\sigma_2=0.5$ and $\sigma_3=0.8$. The input parameters $\bar{\alpha}$
and $\tilde{\alpha}$ are user-defined, $\bar{\alpha}$ controls whether $\sigma_1$ or $\sigma_2$ will be used as a centering parameter
for the predictor directions and $\tilde{\alpha}$ controls the frequency of the corrector updates. For all experiments they have been set 
to $\bar{\alpha}=0.5$ and $\tilde{\alpha}=0.1$.

\subsection{Benchmarks}\label{benchsubsec}
In order to have a base of comparison we choose to show the efficiency of the matrix-free
IPM on already existing benchmarks, which have been used by several 
researchers including \cite{IEEEhowto:spgl1,IEEEhowto:fpc1}. 
Experiments are performed on $18$ real valued sparse reconstruction problems, see Table \ref{sparcocollect}, from the Sparco collection \cite{IEEEhowto:sparco}. 
In total, Sparco collection consists of $26$ problems, out of which $6$ are complex valued and $20$ real valued. For the experiments in this section, 
the complex valued problems with IDs $1$, $4$, $8$, $501$ to $503$, are ignored since the matrix-free IPM manipulates only real data. Moreover, $2$ out of the $20$ 
real valued problems, with IDs $703$ and $901$, are also ignored because of their difficulty to be generated on any machine in a stand-alone approach, since, they require external packages such
as CurveLab \cite{curvelab} and FFTW \cite{fftw}. For problems
in Table \ref{sparcocollect} with IDs $401$ to $403$, $601$ to $603$, $701$ and $702$, the optimal representation $\hat{x}$ is not given by Sparco
toolbox. Therefore, the SPGL1\_bp solver is used to obtain $\hat{x}$ with required high accuracy. In particular to obtain $\hat{x}$, the parameters of SPGL1\_bp are set to
\begin{equation}\label{highacc}
\mbox{bpTol}  = \mbox{1.0e-15}, \quad \mbox{optTol}   = \mbox{1.0e-15}, \quad \mbox{decTol}  = 20\thinspace000.  
\end{equation}
where bpTol controls the tolerance for identifying a basis pursuit solution, optTol controls the optimality tolerance and decTol controls the frequency of Newton updates. 
Some of the components of the obtained solution from
the SPGL1\_bp might be nearly but not exactly zero, hence, as nonzero components are considered the ones in the set
$\mbox{\emph{nnz}}(x) := \{k=1,2,\ldots,n \ | \ \sum\nolimits_{i=1}^k|\bar{x}_i|\le 0.999\|x\|_1\}$, where $\bar{x}$ is the vector $x$ sorted in decreasing order of absolute values of its components. 
Then we set $\hat{x}_i=x_i$ if $i\in W$ otherwise $\hat{x}_i=0$, where $W := \{i=1,2,\ldots,n \ | \ i\in\mbox{ \emph{nnz}}(x) \}$.
%
\begin{table}
\renewcommand{\arraystretch}{1.2}
\caption{$18$ out of $20$ real valued problems of Sparco collection}
\label{sparcocollect}
\centering
\begin{tabular}{|l|c|c|l|c|}
\hline
\multicolumn{1}{|c|}{\multirow{1}{*}{Problem}} & ID & m,  n & \multicolumn{1}{c|}{\multirow{1}{*}{Operator}} &
\multicolumn{1}{c|}{\multirow{1}{*}{$\|\hat{x}\|_1$}} \\ \hline \hline
\cite{IEEEhowto:DonohoHuo,IEEEhowto:DonohoJohnstone} blocksig & 2 & 1\thinspace024,  1\thinspace024 & wavelet & 4.5e\scalebox{.75}{$\mplus$}02 \\  \hline \hline
\cite{IEEEhowto:DonohoHuo,IEEEhowto:DonohoJohnstone} blkheavi  & 9& 128,  128 & heaviside & 4.1e\scalebox{.75}{$\mplus$}01\\  \hline \hline
\cite{IEEEhowto:DonohoHuo,IEEEhowto:DonohoJohnstone}  blknheavi& 10 & 1\thinspace024,  1\thinspace024 & normal. heaviside & 9.8e\scalebox{.75}{$\mplus$}02\\  \hline \hline
\cite{IEEEhowto:gpsr} blurrycam & 701 &65\thinspace536,  65\thinspace536 & blurring, wavelet & 1.0e\scalebox{.75}{$\mplus$}04\\  \hline \hline
\cite{IEEEhowto:gpsr} blurspike & 702 &16\thinspace384,  16\thinspace384 & blurring & 3.4e\scalebox{.75}{$\mplus$}02\\  \hline \hline
cosspike & 3 & 1\thinspace024,  2\thinspace048 & DCT & 2.2e\scalebox{.75}{$\mplus$}02\\  \hline \hline
gausspike & 11 & 256,  1\thinspace024 & Gausian ens. & 2.4e\scalebox{.75}{$\mplus$}01\\  \hline \hline
gcosspike & 5 & 300,  2\thinspace048 & Gausian ens., DCT & 1.8e\scalebox{.75}{$\mplus$}02\\  \hline \hline
\cite{IEEEhowto:HennenfentHerrmanjitter} jitter & 902 &  200,  1\thinspace000 & DCT & 1.7e\scalebox{.75}{$\mplus$}00 \\  \hline \hline
\cite{IEEEhowto:CandesRomberg} p3poly & 6 & 600,  2\thinspace048 & Gausian ens., wavelet & 1.7e\scalebox{.75}{$\mplus$}03\\  \hline \hline
\cite{IEEEhowto:l1magic} sgnspike & 7 & 600,  2\thinspace560 & Gaussian ens. & 2.0e\scalebox{.75}{$\mplus$}01\\  \hline \hline
\cite{IEEEhowto:DossalMallat} spiketrn & 903 & 1\thinspace024,  1\thinspace024 & 1D convolution & 1.3e\scalebox{.75}{$\mplus$}01\\ \hline \hline
\cite{IEEEhowto:soccer1} soccer1 & 601 & 3\thinspace200, 4\thinspace096 & binary, wavelet & 4.2e\scalebox{.75}{$\mplus$}02 \\ \hline \hline
\cite{IEEEhowto:soccer1} soccer2 & 602 & 3\thinspace200, 4\thinspace096 & binary, Haar wavelet & 7.4e\scalebox{.75}{$\mplus$}02 \\ \hline \hline
srcsep1 & 401 & 29\thinspace166, 57\thinspace344 & windowed DCT & 1.0e\scalebox{.75}{$\mplus$}03 \\ \hline \hline
srcsep2 & 402 & 29\thinspace166, 86\thinspace016 & windowed DCT & 7.7e\scalebox{.75}{$\mplus$}03 \\ \hline \hline
srcsep3 & 403 & 196\thinspace608, 196\thinspace608 & blurring, wavelet & 1.0e\scalebox{.75}{$\mplus$}03 \\ \hline \hline
\cite{IEEEhowto:gpsr} yinyang & 603 & 1\thinspace024, 4\thinspace096 & wavelet & 2.6e\scalebox{.75}{$\mplus$}02 \\ \hline
\end{tabular}
\end{table}

Noise is introduced to the
noiseless measurements $\hat{b}$ using the following command in MATLAB:
\begin{equation}\label{corruptnois}
 b=\mbox{\texttt{awgn}}(\hat{b},\mbox{SNR},\mbox{`measured'}),
\end{equation} 
The function \texttt{awgn} is a MATLAB function 
from Communications Systems Toolbox which adds white Gaussian noise 
to signal $\hat{b}$. The SNR is the signal to noise ratio, measured in dB. 
The `measured' option specifies that the power of the signal is calculated
first before the addition of the noise.

\subsection{Equivalence of $\mbox{BP}_{\epsilon_2}$ and BPDN}\label{sec:equiv}
It has already being stated in Section~\ref{sec:intro} that problems $\mbox{BP}_{\epsilon_2}$ in (\ref{formulationsnoisy3}) and BPDN in (\ref{formulationsnoisy1})
are equivalent given particular parameters $\epsilon_2$ and $\tau$. In this paper the tested solvers implement
problem $\mbox{BP}_{\epsilon_2}$, i.e. SPGL1\_bpdn, or problem BPDN, i.e. matrix-free IPM, FPC\_AS CG $\ell_1\_\ell_s$
and PDCO. In order to perform a fair comparison among these solvers it has to be made certain that all codes solve equivalent problems.
Otherwise, different optimal solutions will be obtained, therefore, a straightforward and clear comparison would be impossible.
Unfortunately, exact values of $\epsilon_2$ and $\tau$ which make problems $\mbox{BP}_{\epsilon_2}$ and BPDN equivalent are not known a priori,
except for the case of orthogonal matrix $A$. However, given $\epsilon_2$ an approximate $\tau$ can be computed such that an approximate equivalence holds.

According to \cite{IEEEhowto:spgl1} given $\epsilon_2$ the parameter $\tau$ which makes problems $\mbox{BP}_{\epsilon_2}$ and BPDN equivalent,
is the optimal Lagrange multiplier of the dual problem of $\mbox{BP}_{\epsilon_2}$. Since, SPGL1\_bpdn outputs both the primal iterates and the optimal
Lagrange multiplier of $\mbox{BP}_{\epsilon_2}$, it can be used to approximately find $\tau$. Having such a parameter $\tau$ the BPDN solvers,
matrix-free IPM, FPC\_AS CG $\ell_1\_\ell_s$, PDCO and the $\mbox{BP}_{\epsilon_2}$ solver SPGL1\_bpdn can be legitimately compared.

Moreover, in order to be able to compare the quality of the reconstructed representations for each solver when solving equivalent problems, the optimal sparsest representation
for a particular level of noise needs to be known in advance. This is definitely not the case when noise is added manually by the user to a noiseless signal $\hat{b}$ using (\ref{corruptnois}). 
Due to manual corruption of signal $\hat{b}$, the energy of the added noise $\epsilon_2=\|e\|_2$ is known in advance. Hence, solving $\mbox{BP}_{\epsilon_2}$ will give the optimal
sparsest representation for this particular level of noise, $\epsilon_2$. This solution is obtained by first calling SPGL1\_bpdn solver to solve $\mbox{BP}_{\epsilon_2}$ by setting 
$\epsilon_2=\|e\|_2$ with required high accuracy, see (\ref{highacc}). During this process the approximate $\tau$ which makes problems $\mbox{BP}_{\epsilon_2}$ and BPDN equivalent
is obtained from SPGL1\_bpdn as has been described before. Hence, it is concluded that approximate $\tau$ and optimal sparse representations can be calculated
such that a fair comparison can be conducted.

Finally, for noiseless signals $\hat{b}$, the problem is easier. Problems BP (\ref{onenormform}) and BPDN \eqref{formulationsnoisy1} are almost equivalent for sufficiently small $\tau$, i.e. 1.0e-10. 
However, such a small $\tau$ can make the $\ell_1$-norm in BPDN numerically negligible, see Figure 6.2 in \cite{IEEEhowto:spgl1} for numerical examples.
For the former reason, if such a case is observed for BPDN solvers, parameter $\tau$ is set experimentally to a larger value, their values are given in Table \ref{regparams}.

\subsection{Termination Criteria and Parameter Tuning}\label{sec:tuning}
Termination of the compared solvers is forced when a solution of similar
quality to the one of matrix-free IPM is obtained. In order to do so, the termination
criteria of the compared solvers are changed. In particular, SPGL1 solver is
terminated when the following criteria are satisfied
$$
n1d(x_W^k) \le n1d(x_W^m), \quad r.e(x_W^k) \le r.e(x_W^m), \quad res=(x_W^k) \le res(x_W^m),
$$
where $x_W^k$ is the projected representation at the $k^{th}$ iteration of SPGL1 and $x_W^m$ is the projected representation obtained by matrix-free IPM.
Solvers FPC\_AS, $\ell_1\_\ell_s$ and PDCO are terminated when the following conditions are satisfied
$$
obj(x_W^k) \le obj(x_W^m), \quad r.e(x_W^k) \le r.e(x_W^m).
$$
Using these criteria for the compared solvers it is made certain that the reconstructed representations
have approximately the same $\ell_1$-norm, $\ell_2$-norm of residual $Ax_W-b$ and number of non
zero elements in $x_W$. The differentiation of the termination criteria for solver SPGL1 is done because SPGL1 solves problem $\mbox{BP}_{\epsilon_2}$, unlike
all other codes which solve the BPDN problem. Hence, it is more natural and fair for SPGL1 to be compared
with other solvers using termination criteria in SPGL1 way.

Occasionally, certain solvers required too
many matrix-vector products without achieving a solution of similar quality
to the one delivered by the matrix-free IPM. In this case the solvers were terminated
when $\mbox{\emph{nMat}} > 40\thinspace000$. 

Regarding the parameter tuning of the compared solvers, all their parameters are set
to their default values. For the matrix-free IPM the following parameters need to be set.
\begin{itemize}
\item tol: Relative duality gap of primal-dual pair (\ref{wrightBPDN}) and (\ref{dualquadraticBPDN}). For noisy problems, this parameter varies
between 1.0e-6 and 1.0e-10. For noiseless problems it varies between 1.0e-7 and 1.0e-14.
\item maxiters: Maximum number of iterations. For all problems this parameter is set to $100$.
\item tolpcg: Tolerance of preconditioned CG method. For noisy problems this parameter varies between 1.0e-1 and 1.0e-2 and
for noiseless ones it varies between 1.0e-1 and 1.0e-6.
\item mxiterpcg: Maximum number of iterations of preconditioned CG method. For all problems this parameter is set
to $200$.
\end{itemize}
Since a large number of experiments has been performed, the exact parameter tuning
of matrix-free IPM is not given here. However, it can be found in the MATLAB scripts which reproduce
the results in this section, see \url{http://www.maths.ed.ac.uk/ERGO/mfipmcs/}.

Finally, the parameters $\epsilon_2$ and $\tau$ in problems (\ref{formulationsnoisy3}) and (\ref{formulationsnoisy1}), respectively,
for noisy problems are set as described in Subsection~\ref{sec:equiv} for $\epsilon_2=\|e\|_2$. For noiseless problems $\tau$ is set to 
arbitrarily small values given in Table \ref{regparams}.
\subsection{Comparison}\label{sec:comp}
In this section we present computational results 
obtained for the Sparco collection problems discussed in the Benchmarks section. 
Both noisy and noiseless measurements are considered. 
Noise is added to measurements using (\ref{corruptnois}) by fixing
the $\mbox{SNR}=60$ dB.
A comparison among the previously mentioned solvers is made in terms of the quality of 
reconstruction and computational effort. The results of experiments are shown in Table \ref{table4}.
The first column in Table \ref{table4} shows the IDs of the Sparco problems. For each ID the first and second sub-rows give results for noisy and noiseless measurements, respectively. 
The second column reports the $\ell_1$-norm of the projected reconstructed representation for matrix-free IPM. 
The third column shows the relative error $r.e$, see Table \ref{tablesymbabb}, of the projected reconstructed representation that was achieved by matrix-free IPM. 
The forth column shows the $\ell_2$-norm of the residual, denoted by $res$ in Table \ref{tablesymbabb}, for matrix-free IPM.
The rest of the table shows the number of matrix-vector 
products, \emph{nMat}, that were needed by each solver to reconstruct a solution of similar quality to the one of
matrix-free IPM. In cases when number of matrix-vector products required by a solver exceeded $40\thinspace000$, 
the solver was terminated with a failure status. To be precise, it is a failure to 
converge to a solution similar to the one obtained by matrix-free IPM.
Problems for which the matrix-free IPM converged with the lowest number of matrix-vector products among all solvers compared are denoted in bold.
In Table \ref{regparams} are shown the regularization parameters $\tau$ for noiseless signals that were used for BPDN for solvers
matrix-free IPM,  FPC\_AS, $\ell_1\_\ell_s$ and PDCO. Finally, for noiseless signals the version SPGL1\_bp of SPGL1 solver
is called. 

One can observe in Table \ref{table4} that the matrix-free IPM was the fastest
solver in 11 out of 36 noisy and noiseless problems, while it was the second fastest for another 14 problems,
denoted by italic font.
\begin{table}
\center
\caption{Results for noisy and noiseless Sparco problems}
\begin{tabular}{|c|c|c|c|r|r|r|r|r|}
\hline
\multicolumn{4}{|c|}{}&\multicolumn{1}{c|}{mfIPM}&\multicolumn{1}{c}{$\ell_1\_\ell_s$}&\multicolumn{1}{|c|}{PDCO}
  &\multicolumn{1}{c|}{FPC\_AS}&\multicolumn{1}{c|}{SPGL1} \\ \hline
ID & $\|x\|_1$ & \emph{r.e} & \emph{res} &\multicolumn{5}{c|}{\emph{nMat}} \\ \hline \hline

\multirow{2}{*}{2}   & 4.5e\scalebox{.75}{$\mplus$}02 &5.3e-04& 8.2e-02 &\emph{61}         &726    &6\thinspace611   &9          &40\thinspace000\\
                     	      & 4.5e\scalebox{.75}{$\mplus$}02 &1.0e-11 &8.4e-10 &\emph{65}         &644    &40\thinspace011 &40\thinspace002 &21\\ \hline \hline

\multirow{2}{*}{3}   & 2.2e\scalebox{.75}{$\mplus$}02 &9.9e-04 &1.3e-01 &195        &446   &5\thinspace115     &119   &70\\
                                  & 2.2e\scalebox{.75}{$\mplus$}02 &1.8e-08 &1.8e-06 &387        &1\thinspace540 &40\thinspace005  &192   &146\\ \hline \hline

\multirow{2}{*}{5}   & 1.8e\scalebox{.75}{$\mplus$}02 &3.0e-03 &2.3e-01&1\thinspace367       &5\thinspace042    &28\thinspace369 &630   &510\\
                                  & 1.8e\scalebox{.75}{$\mplus$}02 &2.4e-05 &1.8e-03&\emph{6\thinspace239}       &20\thinspace758 &41\thinspace479 &636   &40\thinspace000\\ \hline \hline
                                  
\multirow{2}{*}{6}   & 1.7e\scalebox{.75}{$\mplus$}03 &2.2e-02 &1.7e\scalebox{.75}{$\mplus$}01&\emph{2\thinspace507}        &2\thinspace838   &42\thinspace125   &720    &40\thinspace000\\
                                  & 1.7e\scalebox{.75}{$\mplus$}03 &4.6e-02 &3.6e\scalebox{.75}{$\mplus$}01&\emph{7\thinspace193}        &40\thinspace011   &18\thinspace685   &573    &40\thinspace000\\ \hline \hline

\multirow{2}{*}{7}   & 2.0e\scalebox{.75}{$\mplus$}01 &5.4e-04 &2.3e-03&165        &452   &955   &78    &63\\
                                  & 2.0e\scalebox{.75}{$\mplus$}01 &5.6e-07 &1.1e-06&259        &952   &709   &78    &87\\ \hline \hline

\multirow{2}{*}{9}   & 4.1e\scalebox{.75}{$\mplus$}01 &1.0e-03 &1.7e-01&\bf{377}    &574  &579 &446    &8\thinspace855\\
                                  & 4.1e\scalebox{.75}{$\mplus$}01 &5.2e-12 &1.6e-10&\bf{661}   &3\thinspace860  &7\thinspace113  &40\thinspace002 &40\thinspace000\\ \hline \hline

\multirow{2}{*}{10}  & 9.0e\scalebox{.75}{$\mplus$}02 &9.3e-02 &3.3e\scalebox{.75}{$\mplus$}00&\emph{2\thinspace431}  &11\thinspace421  &1\thinspace043  &40\thinspace001 &40\thinspace000\\
                                  & 9.8e\scalebox{.75}{$\mplus$}02 &1.0e-09 &8.9e-08                &\bf{4\thinspace519}   &8\thinspace192  &42\thinspace647 &40\thinspace001 &40\thinspace000\\ \hline \hline
                                  
\multirow{2}{*}{11}  & 2.4e\scalebox{.75}{$\mplus$}01 &1.4e-03 &1.3e-01                &767  &2\thinspace186  &3\thinspace291  &217 &143\\
                                  & 2.4e\scalebox{.75}{$\mplus$}01 &6.8e-05 &5.2e-03                &1\thinspace241&4\thinspace542  &4\thinspace299 &219 &189\\ \hline \hline
                                  
 \multirow{2}{*}{401}& 1.0e\scalebox{.75}{$\mplus$}03 &8.9e-02 &1.2e-01                &\emph{2\thinspace747}  &42\thinspace622  &61\thinspace327  &40\thinspace076 &882\\
                                    & 1.0e\scalebox{.75}{$\mplus$}03 &7.7e-02 &9.7e-02                &\emph{3\thinspace193}   &43\thinspace512    &48\thinspace511    &40\thinspace076 &814\\ \hline \hline

 \multirow{2}{*}{402}& 1.0e\scalebox{.75}{$\mplus$}03 &1.0e-01 &1.9e-01                &\emph{4\thinspace393}  &46\thinspace458  &44\thinspace169  &40\thinspace078 &517\\
                                    & 1.0e\scalebox{.75}{$\mplus$}03 &8.1e-02 &2.0e-01                &\emph{4\thinspace991} &49\thinspace122    &43\thinspace845    &40\thinspace078 &617\\ \hline \hline
                                    
  \multirow{2}{*}{403}& 7.6e\scalebox{.75}{$\mplus$}03 &1.2e-02 &7.1e-01                &{2\thinspace841}  &6\thinspace136  &40\thinspace495  &2\thinspace305 &699\\
                                    & 7.7e\scalebox{.75}{$\mplus$}03 &4.1e-03 &9.2e-02                &\emph{6\thinspace031}&43\thinspace278    &69\thinspace913    &40\thinspace046 &932\\ \hline \hline

 \multirow{2}{*}{601}& 3.3e\scalebox{.75}{$\mplus$}02 &6.1e-02 &5.7e\scalebox{.75}{$\mplus$}01       &\bf{1\thinspace179}  &14\thinspace684  &40\thinspace153  &40\thinspace080 &40\thinspace000\\
                                    & 4.0e\scalebox{.75}{$\mplus$}02 &3.9e-02 &4.8e\scalebox{.75}{$\mplus$}00       &\emph{4\thinspace409}  &9\thinspace664    &43\thinspace369    &40\thinspace076 &1\thinspace116\\ \hline \hline
                                    
 \multirow{2}{*}{602}& 5.9e\scalebox{.75}{$\mplus$}02 &1.0e-01 &4.8e\scalebox{.75}{$\mplus$}01       &\emph{1\thinspace199}  &17\thinspace097  &40\thinspace631  &40\thinspace023 &898\\
                                    & 6.4e\scalebox{.75}{$\mplus$}02 &1.1e-01 &3.2e\scalebox{.75}{$\mplus$}00       &\bf{4\thinspace669}  &22\thinspace392    &42\thinspace139    &40\thinspace043 &40\thinspace000\\ \hline \hline

 \multirow{2}{*}{603}& 2.6e\scalebox{.75}{$\mplus$}02 &4.1e-03 &4.2e-02       &\emph{1\thinspace777}  &40\thinspace693  &50\thinspace369  &40\thinspace002 &443\\
                                    & 2.5e\scalebox{.75}{$\mplus$}02 &4.6e-02 &5.9e-01       &3\thinspace545  &2\thinspace350    &40\thinspace181    &338 &95\\ \hline \hline
                                                                                                                                                
 \multirow{2}{*}{701}& 9.1e\scalebox{.75}{$\mplus$}03 &4.6e-02 &1.5e-01       &\bf{1\thinspace217}  &33\thinspace160  &91\thinspace147  &40\thinspace044 &1\thinspace658\\
                                    & 1.0e\scalebox{.75}{$\mplus$}04 &2.4e-07 &4.1e-03       &\bf{1\thinspace907}  &4\thinspace722    &49\thinspace093    &40\thinspace001 &40000\\ \hline \hline

 \multirow{2}{*}{702}& 3.4e\scalebox{.75}{$\mplus$}02 &4.8e-03 &3.4e-03       &\bf{711}  &1\thinspace600  &5\thinspace525  &40\thinspace001 &40\thinspace000\\
                                    & 3.4e\scalebox{.75}{$\mplus$}02 &6.4e-08 &2.4e-03       &\bf{1\thinspace913}  &3\thinspace030    &49\thinspace009    &40\thinspace037 &12\thinspace388\\ \hline \hline

 \multirow{2}{*}{902}& 1.7e\scalebox{.75}{$\mplus$}00 &5.3e-04 &5.2e-04       &143  &498  &237  &40 &49\\
                                    & 1.7e\scalebox{.75}{$\mplus$}00 &2.0e-06 &9.6e-07       &239  &675    &279    &42 &59\\ \hline \hline

 \multirow{2}{*}{903}& 1.3e\scalebox{.75}{$\mplus$}01 &2.4e-03 &1.4e-01       &\bf{3\thinspace105}  &8\thinspace466  &4\thinspace775  &8\thinspace237 &6\thinspace735\\
                                    & 1.3e\scalebox{.75}{$\mplus$}01 &3.5e-06 &1.9e-04       &\bf{4\thinspace163} &25\thinspace128    &30\thinspace979    &33\thinspace529 &40\thinspace000\\  
\hline
\end{tabular}
\label{table4}
\end{table}
\begin{table}
\renewcommand{\arraystretch}{1.2}
\center
\caption{Regularization parameters $\tau$ for problem BPDN and noiseless measurements $\hat{b}$ for the experiments performed in Table \ref{table4}}
	\begin{tabular}{|c|c|}
	\hline
	         $\tau$ & Problems \\ \hline \hline
 		1.0e-10 &  $2$, $9$, $10$, $701$, $702$ \\  \hline \hline
                   1.0e-08  &  $401$, $402$, $603$ \\  \hline \hline
                   1.0e-07 & $3$, $7$, $902$ \\  \hline \hline
                   1.0e-05 & $903$\\  \hline \hline
                   1.0e-04 & $5$, $403$, $601$, $602$\\  \hline \hline
                   1.0e-03 & $6$\\  \hline \hline
                   1.0e-02 & $11$ \\ \hline
	\end{tabular}
	\label{regparams}
\end{table}
It is important to be mentioned that the performance of the compared solvers crucially depends on the condition number of matrices
build of subsets of columns of matrix $A$ with cardinality $\kappa$, less than $m$, i.e. full-rank sub-matrices of $A$. Unfortunately, it is a computational demanding task to 
check the condition number of every full-rank sub-matrix for the problems shown in Table \ref{sparcocollect}. Nevertheless, by experimenting with a few sub-matrices
one can get a picture of how well-conditioned sub-matrices of $A$ might be.

Based on the previous criterion we observed that on problems that the matrix-free IPM was first or second, matrix $A$ had relatively ill-conditioned sub-matrices,
at least for the ones that we experimented with. The previous implies that the proposed preconditioner was not as efficient
as predicted in Section \ref{sec:PCG}. However, the ill-conditioning also adversely affected the performance of SPGL1 and FPC\_AS, as
shown in Table \ref{table4}.
On the contrary, on problems that matrix $A$ seemed to have well-conditioned sub-matrices, the preconditioner was very efficient,
which resulted in a very fast matrix-free IPM. However, SPGL1 and FPC\_AS were faster. For example, see problems with IDs $2$, $3$, $7$ and $902$.

\subsection{Robustness to Noise}\label{sec:robust}
In this subsection we compare the matrix-free IPM with SPGL1, FPC\_AS CG, $\ell_1\_\ell_s$, in terms of their reconstruction capabilities 
for different levels of noise. The results collected in Table \ref{table4} and analysed in Section~\ref{sec:comp} reveal that PDCO and $\ell_1\_\ell_s$
demonstrate comparable efficiency but the latter is usually faster. Therefore, solver PDCO will not be used in our further experiment.

For this experiment, the level of noise is varied from $\mbox{SNR}=10$ dB to $\mbox{SNR}=120$ dB with a step of $10$ dB. The quality of reconstruction
for all solvers is measured using the amplitude criterion \cite{thomsonsingl}
\begin{equation*}
amp(x_W) = \frac{\sqrt{\frac{1}{n}\|x_W-\hat{x}\|_2^2}}{\sqrt{\frac{1}{m}\|e\|_2^2}}.
\end{equation*}
The main purpose of using the $amp$ criterion, instead of $r.e$, is that the former
amplifies the $r.e$, the nominator of $amp$, as $\|e\|_2\to 0$. Hence, less accurate
representations will be emphasized.

As in Section~\ref{sec:comp} when the optimal representation $\hat{x}$ of BP is unknown it is calculated approximately using solver
SPGL1\_bp with required high accuracy (\ref{highacc}). In order to have a fair comparison it is necessary to know at least approximately 
the parameter $\tau$ which makes problems $\mbox{BP}_{\epsilon_2}$ and BPDN equivalent and moreover, the optimal sparse
representation of $\mbox{BP}_{\epsilon_2}$ for $\epsilon_2=\|e\|_2$. The former issues are solved as described in Subsection~\ref{sec:equiv}.

To compare the solvers the following criterion is defined
\begin{equation}\label{ampcomp}
rampd(x_W)=\frac{\displaystyle\max(amp(x_W^*)-amp(x_W^s),0)}{amp(x_W^s)},
\end{equation}
where $rampd$ stands for relative amplitude difference, $x_W^*$ is the reconstructed projected representation by solvers matrix-free IPM, FPC\_AS CG, $\ell_1\_\ell_s$, and $x_W^s$ is the
reconstructed projected representation of solver SPGL1\_bpdn. Notice that if $rampd$ equals zero, then the representation $x_W^*$ is of better quality than
$x_W^s$, otherwise the inverse is true. 

In Table \ref{tablerobnoise1} is shown the average value of $rampd$ over all SNR for each solver. 
The first column of Table \ref{tablerobnoise1} reports the ID of every Sparco problem. From the second to the forth column the average $rampd$ over all SNR for each solver is shown.
The last three columns report the average $rampd$ for SNRs from $10$ dB to $60$ dB for each solver. Notice in Table \ref{tablerobnoise1} that matrix-free IPM for problems
with IDs $2$ to $11$ and $701$ to $903$ was consistently recovering a high quality solution. For problems with IDs $401$ to $603$ for $\mbox{SNR}>60$ dB all BPDN solvers, matrix-free IPM, FPC\_AS CG
and $\ell_1\_\ell_s$, were unable to reconstruct an adequate representation and this is in contrast to SPGL1. A similar observation has been reported in \cite{IEEEhowto:spgl1}. In this work the authors mentioned that this issue of BPDN
solvers might be due to very small regularisation parameter $\tau$, obtained from SPGL1 solver as the energy of noise is decreased. In this case, the regularization effect of the $\ell_1$-norm starts to be negligible and the solvers face considerable numerical difficulties.
However, in our experiments we observed for these problems that not always the $\tau$ parameter was small and additionally, there were other problems were $\tau$ was even smaller but successful reconstruction was possible. 
Therefore, we conclude that this failure of BPDN solvers might be 
problem dependent.
\begin{table}
\center
\caption{Average quality reconstruction results over SNR from $10$ dB to $120$ dB for solvers matrix-free IPM, FPC\_AS and $\ell_1\_\ell_s$ on Sparco problems in Table \ref{sparcocollect}}
\begin{tabular}{|c|x{1.39cm}|x{1.39cm}|x{1.39cm}|x{1.39cm}|x{1.39cm}|x{1.39cm}|}
\hline
     & \multicolumn{3}{x{4.17cm}|}{Avg. $rampd$ for SNR from $10$ dB to $120$ dB} & \multicolumn{3}{x{4.17cm}|}{Avg. $rampd$ for SNR from $10$ dB to $60$ dB}  \\ \hline
ID & mfIPM &FPC\_AS &$\ell_1\_\ell_s$ & mfIPM &FPC\_AS &$\ell_1\_\ell_s$ \\ \hline \hline
$  2$ & 6.1e-09 & 2.5e-10 & 0.0e\scalebox{.75}{$\mplus$}00 & 2.8e-13 & 2.6e-13 & 0.0e\scalebox{.75}{$\mplus$}00 \\ \hline \hline 
$  3$ & 1.3e-04 & 8.7e-05 & 0.0e\scalebox{.75}{$\mplus$}00 & 4.0e-09 & 6.4e-14 & 0.0e\scalebox{.75}{$\mplus$}00 \\ \hline \hline 
$  5$ & 5.1e-06 & 5.0e-07 & 1.7e-01 & 7.1e-11 & 9.8e-07 & 0.0e\scalebox{.75}{$\mplus$}00 \\ \hline \hline 
$  6$ & 1.2e-07 & 2.4e-10 & 1.1e\scalebox{.75}{$\mplus$}00 & 2.5e-07 & 4.8e-10 & 0.0e\scalebox{.75}{$\mplus$}00 \\ \hline \hline 
$  7$ & 1.5e-02 & 5.7e-08 & 0.0e\scalebox{.75}{$\mplus$}00 & 4.3e-06 & 3.5e-15 & 0.0e\scalebox{.75}{$\mplus$}00 \\ \hline \hline 
$  9$ & 1.1e-08 & 1.1e-01 & 4.9e-06 & 2.1e-08 & 2.1e-01 & 1.0e-08 \\ \hline \hline 
$ 10$ & 7.3e-04 & 1.6e-01 & 0.0e\scalebox{.75}{$\mplus$}00 & 1.5e-03 & 2.5e-01 & 0.0e\scalebox{.75}{$\mplus$}00 \\ \hline \hline 
$ 11$ & 4.2e-05 & 1.8e-05 & 0.0e\scalebox{.75}{$\mplus$}00 & 1.4e-10 & 3.7e-12 & 0.0e\scalebox{.75}{$\mplus$}00 \\ \hline \hline 
$401$ & 8.6e\scalebox{.75}{$\mplus$}00 & 1.2e\scalebox{.75}{$\mplus$}01 & 8.5e\scalebox{.75}{$\mplus$}00 & 1.8e-01 & 1.9e-01 & 1.7e-01 \\ \hline \hline 
$402$ & 8.0e\scalebox{.75}{$\mplus$}00 & 2.2e\scalebox{.75}{$\mplus$}01 & 8.0e\scalebox{.75}{$\mplus$}00 & 2.0e-01 & 1.9e\scalebox{.75}{$\mplus$}01 & 2.1e-01 \\ \hline \hline 
$403$ & 1.9e\scalebox{.75}{$\mplus$}00 & 3.8e\scalebox{.75}{$\mplus$}00 & 1.2e\scalebox{.75}{$\mplus$}00 & 8.1e-03 & 6.4e-12 & 1.4e-02 \\ \hline \hline 
$601$ & 3.8e\scalebox{.75}{$\mplus$}05 & 4.0e\scalebox{.75}{$\mplus$}03 & 1.5e\scalebox{.75}{$\mplus$}01 & 4.8e-11 & 8.1e\scalebox{.75}{$\mplus$}03 & 1.9e-01 \\ \hline \hline 
$602$ & 1.6e\scalebox{.75}{$\mplus$}00 & 2.9e\scalebox{.75}{$\mplus$}03 & 7.4e\scalebox{.75}{$\mplus$}00 & 1.1e-10 & 5.7e\scalebox{.75}{$\mplus$}03 & 1.3e-01 \\ \hline \hline 
$603$ & 8.1e-01 & 6.7e\scalebox{.75}{$\mplus$}00 & 7.7e-01 & 2.2e-08 & 3.7e-01 & 1.8e-03 \\ \hline \hline 
$701$ & 6.4e-08 & 1.9e\scalebox{.75}{$\mplus$}00 & 3.2e-03 & 0.0e\scalebox{.75}{$\mplus$}00 & 3.8e\scalebox{.75}{$\mplus$}00 & 6.4e-03 \\ \hline \hline 
$702$ & 7.9e-02 & 2.5e\scalebox{.75}{$\mplus$}01 & 6.2e-03 & 0.0e\scalebox{.75}{$\mplus$}00 & 5.1e\scalebox{.75}{$\mplus$}01 & 1.3e-03 \\ \hline \hline 
$902$ & 9.1e-02 & 9.7e-09 & 0.0e\scalebox{.75}{$\mplus$}00 & 2.1e-07 & 1.3e-08 & 0.0e\scalebox{.75}{$\mplus$}00 \\ \hline \hline 
$903$ & 1.5e-04 & 3.8e\scalebox{.75}{$\mplus$}00 & 1.5e-04 & 1.0e-11 & 7.5e\scalebox{.75}{$\mplus$}00 & 0.0e\scalebox{.75}{$\mplus$}00 \\  
\hline
\end{tabular}
\label{tablerobnoise1}
\end{table}

\subsection{Preconditioned Conjugate Gradient Method Against Direct Linear Solver}\label{sec:cgcompdir}
In this subsection we replace PCG in steps $8$ and $10$ of matrix-free IPM with a direct linear solver.
It has been mentioned in Section~\ref{sec:intro} that direct linear solvers are efficient when the system
to be solved is sufficiently sparse. However, for CS the systems (\ref{redNewton}) to be solved are completely dense
due to density of matrix $A$. For this reason, large scale problems
are not storable in a moderate computer with $8$ giga byte of random access memory. 
Even worse, matrix $A$ might be an algorithmic operator, i.e. DCT, therefore, direct solvers cannot be employed. Hence, direct linear solvers
for CS inside an IPM are only applicable when the measurement matrix $A$ is explicitly available, i.e. Gaussian matrix, and only for small scale problems, i.e. $n=2^{12}$ or smaller.
In addition to the former disadvantages of a direct solver for CS problems, its computational complexity for systems (\ref{redNewton}) will
be of order $\mathcal{O}(n^3)$. This is a well known result, for completely dense linear systems. Therefore, it is expected that for very small
instances the two approaches might require similar CPU time to converge, while as dimensions grow the CPU time of the IPM version with the direct linear solver
will increase rapidly. Indeed, this is confirmed by Figure \ref{fig1cpu}. 

Despite the higher computational effort required by direct solvers for CS problems, such an approach will produce exact Newton directions, hence,
one would expect that IPM iterations will be the minimum possible. Suprisingly, in Figure \ref{fig2iters} we show, that matrix-free IPM with PCG requires as few iterations
as its IPM version with a direct linear solver. Indeed, recent analysis of \cite{jacekinexact} indicates that allowing the use of inexact Newton directions in an IPM does not
adversely affect the worst-case complexity result of this method.

In the experiments reported in Figures  \ref{fig1cpu} and  \ref{fig2iters} matrix $A$ is Gaussian, the sparsity pattern of the optimal representation $\hat{x}$ 
is chosen at random, while the nonzero components follow a standard normal distribution. The noiseless measurements are produced by $\hat{b}=A\hat{x}$. The size of 
problem $n$, is varied from $2^5$ to $2^{12}$ with a step of times $2$, the measurements $m$ are varied from $2^3$ to $2^{10}$ with a step of times $2$ and 
the sparsity level $k$ is set to $\lceil m/{20}\rceil$. Finally, the $\tau$ parameter in BPDN problem (\ref{formulationsnoisy1}) is set to $\tau=$1.0e-3.
To solve the linear systems we use the \texttt{mldivide} function of MATLAB, which in case of symmetric real matrices with positive diagonal, i.e. (\ref{redNewton}), 
performs Cholesky factorisation. For details of the \texttt{mldivide} function we refer the reader to \url{http://www.mathworks.co.uk/help/matlab/math/systems-of-linear-equations.html}.
\begin{figure}%
\centering
\subfloat[Scaling of CPU time]{%
\label{fig1cpu}%
\includegraphics[width=59mm,height=50mm]{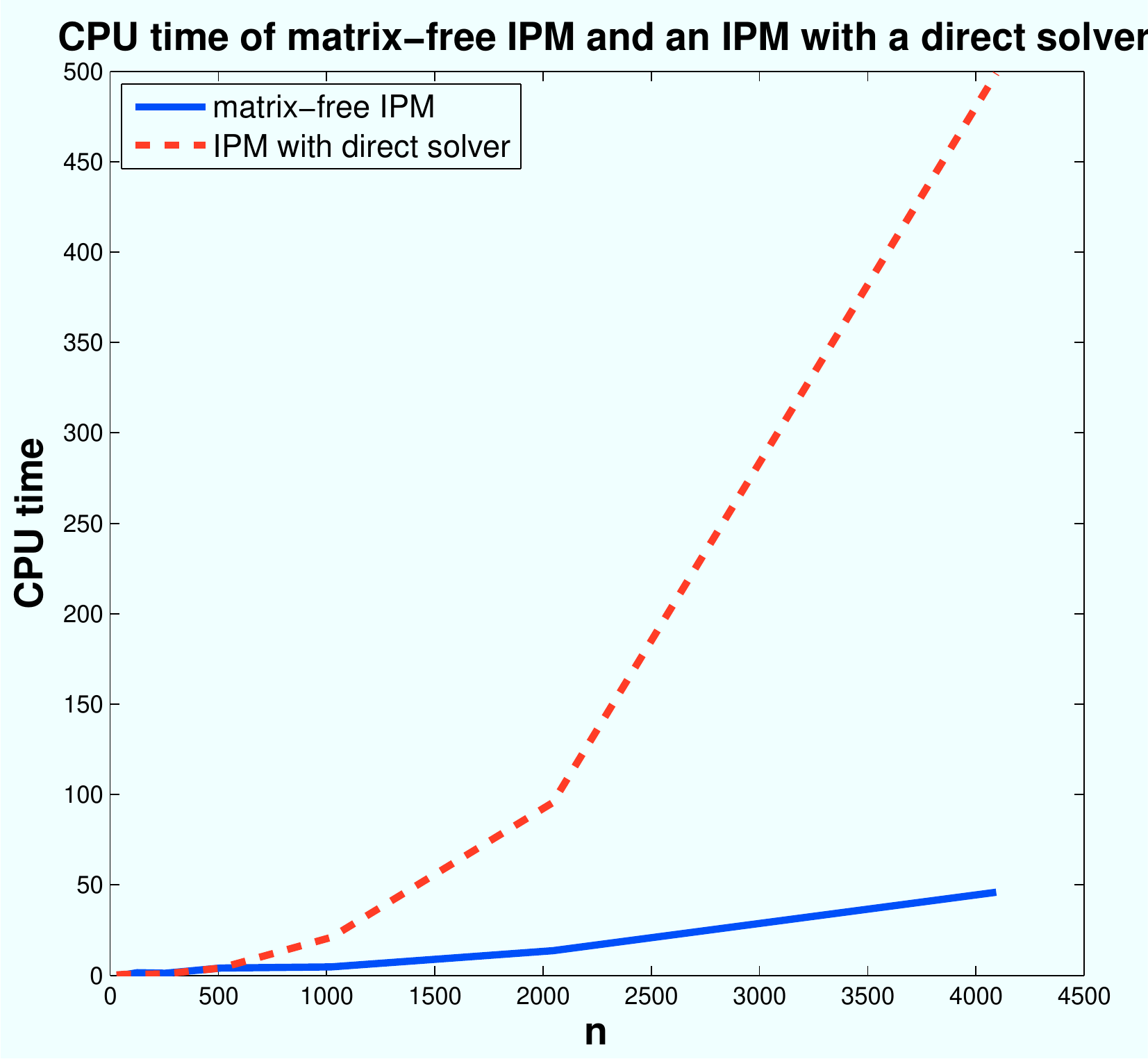}}
\subfloat[Scaling of number of iterations]{%
\label{fig2iters}%
\includegraphics[width=59mm,height=50mm]{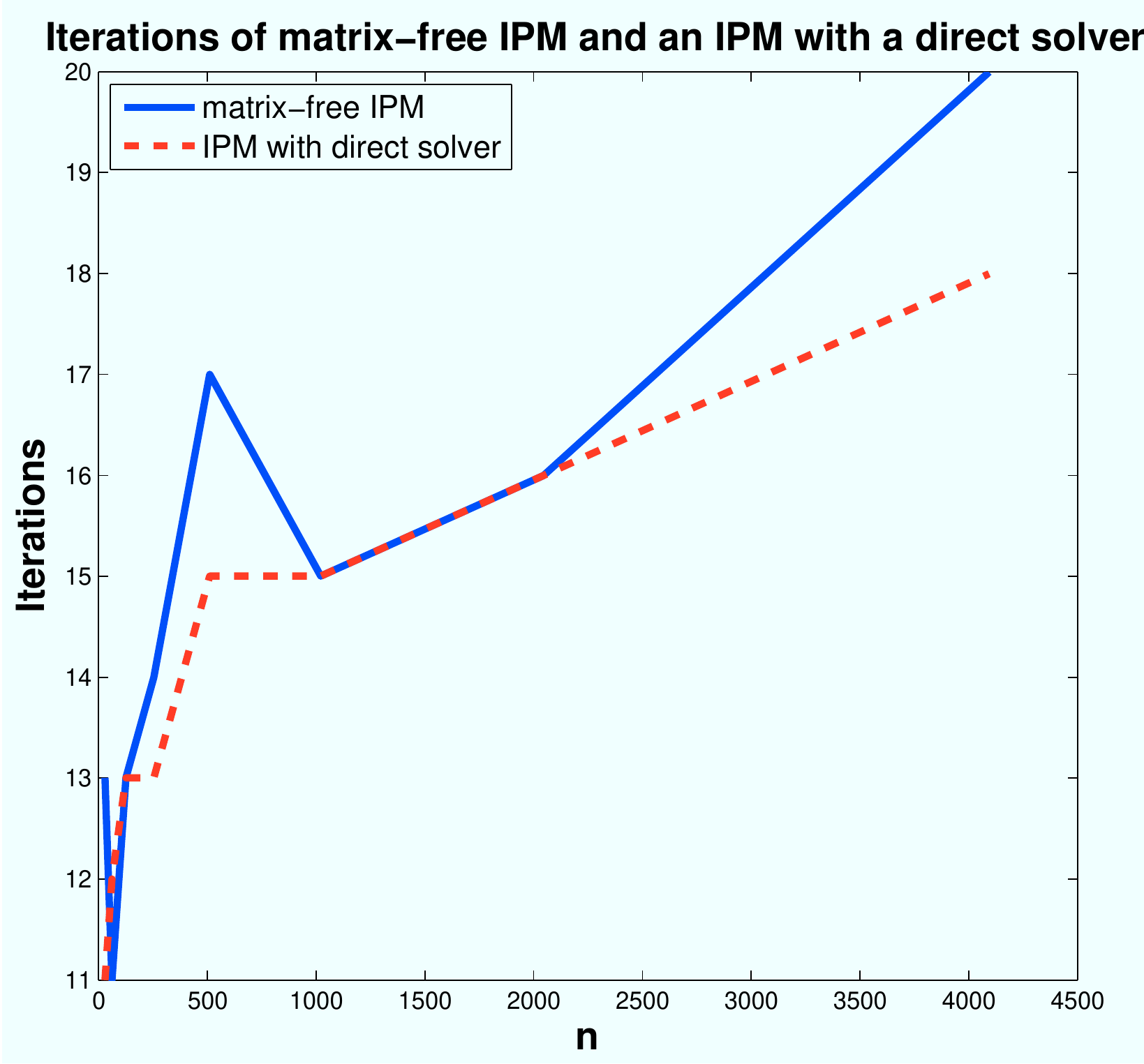}}\\
\caption{Scaling of CPU time and number of iterations as the size of problem $n$ grows for matrix-free IPM
and an IPM in which the PCG is replaced with a direct solver}
\label{figEigs}%
\end{figure}

\subsection{Average Phase Transition} \label{optransition}
\begin{figure}%
\centering
\subfloat{%
\includegraphics[scale=0.61]{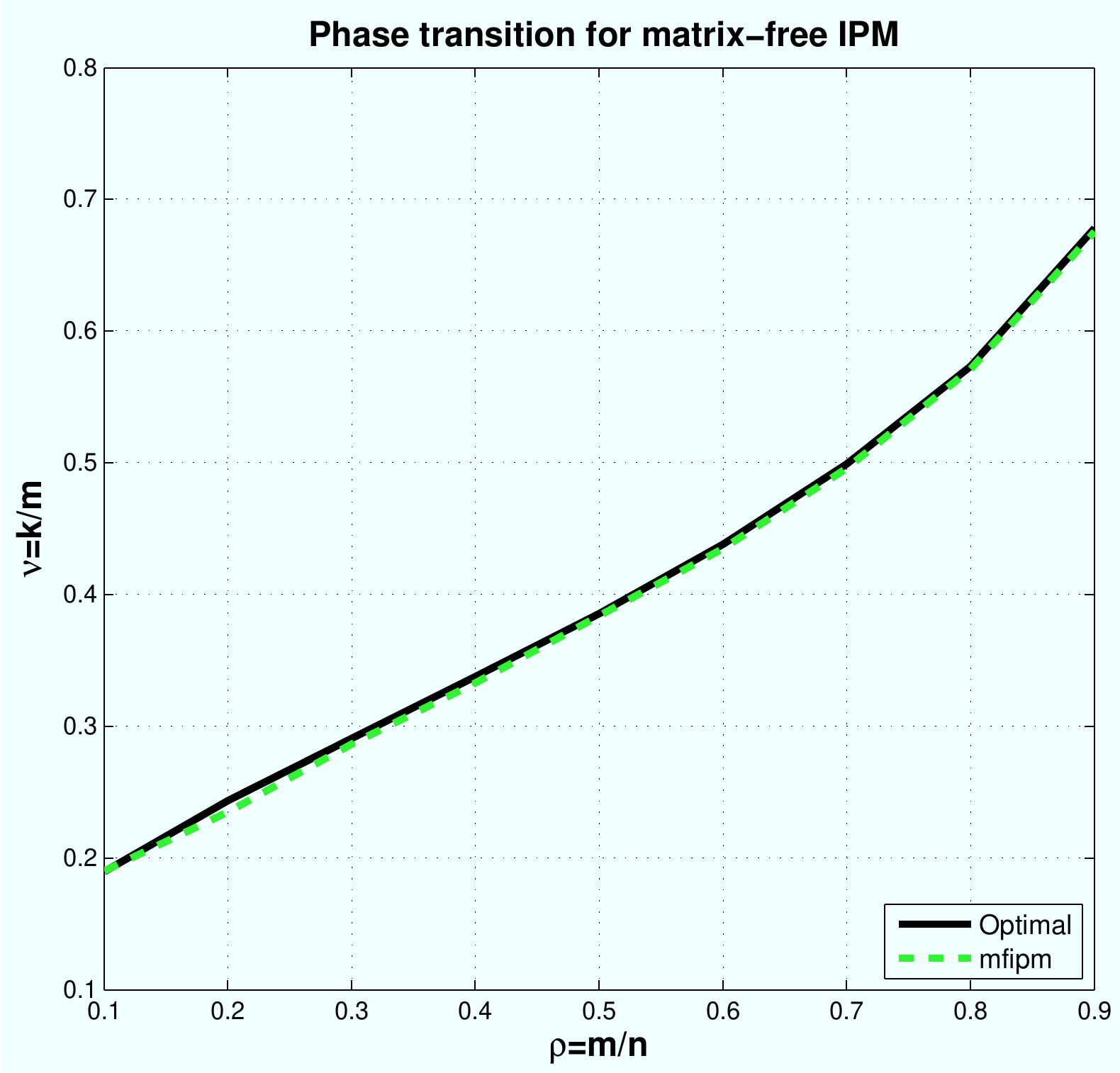}}
\caption{Empirical phase transition for matrix-free IPM. The solid curve denotes the theoretically optimal
         phase transition. The dashed curve denotes the empirical phase transition for $50\%$ success rate of matrix-free IPM}
\label{figPhase}%
\end{figure}
Recently, it has been shown in \cite{DonohoJarred:PUT} that for any problem instance ($A,b$), where $A$ is Gaussian, there is a maximum ratio $\bar{\nu}_{\rho}=k/m$ given
$\rho=m/n$ that below of it the problems (\ref{onenormform}) or (\ref{formulationsnoisy1}) guarantee on average
reconstruction of the optimal sparse representation.
The latter, has been introduced as the notion of average phase transition for Gaussian matrices. Moreover, it has been shown empirically that other measurement matrices
such as partial Fourier, partial Hadamard, Bernoulli etc, have the same average phase transition properties. Ideally, an efficient $\ell_1$-regularization solver should have empirical
average phase transition at the same level $\bar{\nu}_{\rho}$.

In this section we show that the empirical phase transition properties of matrix-free IPM fit the average Gaussian phase transition properties by reproducing a 
similar experiment as that in Section $2$ of \cite{DonohoJarred:PUT}. Let us now explain the experiment. 
The parameter $n$ is fixed to $n=1\thinspace000$. The measurements $m$ are varied from $m=100$ to $m=900$ with a step of $100$. 
For each of the nine measurements $m$ the sparsity of the optimal representation is varied from $k=1$ to $k=m$ with a
step of one and for each $k$, $100$ trials are conducted. The sensing matrix $A$ is chosen by taking randomly $m$ rows
from an $n\times n$ normalized discrete cosine transform matrix. Each nonzero coefficient of the sparse representation is set to $\pm1$
with equal probability, while the sparsity pattern is chosen at random. All the generated problems are solved using the matrix-free IPM solver, the reconstruction is considered successful when $\emph{r.e}\le$ 1.0e-5. 
For each ratio $\nu_{\rho}$ we compute the success ratio $p(\nu_{\rho})=S/100$, where $S$ is the number of trials for which
the $\emph{r.e}\le$ 1.0e-5. It has been demonstrated empiricaly in \cite{DonohoJarred:PUT} that for any problem instance ($A,b$), where $A$
is a partial DCT matrix 
a solver with average phase transition properties has $\displaystyle\max\{\nu_{\rho} \ | \ p(\nu_{\rho})\ge0.5 \}\approx \bar{\nu}_{\rho}$.
The latter means that the empirical average phase transition for $50\%$ success rate overlaps with the theoretical average phase transition for Gaussian matrices.
In Figure \ref{figPhase}, we plot the empirical phase transition for $50\%$ success rate of matrix-free IPM and the theoretical average phase transition. The two curves overlap.

\section{Conclusions}
\label{secCon}
We propose and implement a computationally inexpensive matrix-free primal-dual interior point method, 
based on \cite{IEEEhowto:Jacekmf} and \cite{mybib:G-MCorr}, for the \emph{$\ell_1$-regularized} problems
arising in the field of Compressed Sensing. At every iteration of the proposed primal-dual interior point method
the direction is obtained by solving the linear system (\ref{redNewtonA}) using the conjugate gradient method.
Unfortunately, the matrices $\Theta^{-1} + FF^\mathsf{T}$ in these systems tend to be ill-conditioned
as the algorithm converges, hence, the conjugate gradient method might get slow. To remedy this ill-conditioning we propose a low-cost preconditioner for the conjugate gradient method. 
The proposed preconditioning technique exploits features of Compressed Sensing matrices
as well as interior point methods. Its efficiency is justified theoretically and
confirmed in numerical experiments.

Computational experience presented in this paper shows that although the Compressed Sensing research community 
seems to favor first-order methods, a specialized (matrix-free) interior point method is very competitive and offers a viable alternative.


\bibliographystyle{plain}
\bibliography{csKFandJG.bib}

\end{document}